\providecommand{\U}[1]{\protect\rule{.1in}{.1in}}
\newtheorem{theorem}{Theorem}[section]
\newtheorem{conjecture}[theorem]{Conjecture}
\newtheorem{corollary}[theorem]{Corollary}
\newtheorem{problem}[theorem]{Problem}
\newtheorem{proposition}[theorem]{Proposition}
\newenvironment{proof}[1][Proof]{\noindent\textbf{#1.} }{\ \rule{0.5em}{0.5em}}
\begin{document}

\author{Vadim E. Levit\\Department of Computer Science\\Ariel University, Israel\\levitv@ariel.ac.il
\and Eugen Mandrescu\\Department of Computer Science\\Holon Institute of Technology, Israel\\eugen\_m@hit.ac.il}
\title{The Roller-Coaster Conjecture Revisited}
\date{}
\maketitle

\begin{abstract}
A graph is \textit{well-covered} if all its maximal independent sets are of
the same cardinality \cite{plum}. If $G$ is a well-covered graph, has at least
two vertices, and $G-v$ is well-covered for every vertex $v$, then $G$ is a
$1$-\textit{well-covered graph} \cite{StaplesThesis}. We call $G$ a $\lambda
$-\textit{quasi-regularizable graph} if $\lambda\cdot\left\vert S\right\vert
\leq\left\vert N\left(  S\right)  \right\vert $ for every independent set $S$
of $G$. The \textit{independence polynomial} $I(G;x)$ is the generating
function of independent sets in a graph $G$ \cite{GutHar}.

The Roller-Coaster Conjecture \cite{Michael Travis}, saying that for every
permutation $\sigma$ of the set $\{\left\lceil \frac{\alpha}{2}\right\rceil
,...,\alpha\}$ there exists a well-covered graph $G$ with independence number
$\alpha$ such that the coefficients $\left(  s_{k}\right)  $ of $I(G;x)\ $%
satisfy
\[
s_{\sigma(\left\lceil \frac{\alpha}{2}\right\rceil )}<s_{\sigma(\left\lceil
\frac{\alpha}{2}\right\rceil +1)}<\cdots<s_{\sigma(\alpha)},
\]
has been validated in \cite{CuPe2017}.

In this paper we show that independence polynomials of $\lambda$%
-quasi-regularizable graphs are partially unimodal. More precisely, the
coefficients of an upper part of $I(G;x)$ are in non-increasing order. Based
on this finding, we prove that the domain of the Roller-Coaster Conjecture can
be shortened up to:
\[
\{\left\lceil \frac{\alpha}{2}\right\rceil ,\left\lfloor \frac{\alpha}%
{2}\right\rfloor +1,...,\min\left\{  \alpha,\left\lceil \frac{n-1}%
{3}\right\rceil \right\}  \}
\]
for well-covered graphs, and up to
\[
\{\left\lceil \frac{2\alpha}{3}\right\rceil ,\left\lceil \frac{2\alpha}%
{3}\right\rceil +1,...,\min\left\{  \alpha,\left\lceil \frac{n-1}%
{3}\right\rceil \right\}  \}
\]
for $1$-well-covered graphs, where $\alpha$\ stands for the independence
number, and $n$ is the cardinality of the vertex set.

\textbf{Keywords}:\ independent set, well-covered graph, $1$-well-covered
graph, corona of graphs, independence polynomial.

\end{abstract}

\section{Introduction}

Throughout this paper $G=(V,E)$ is a simple (i.e., a finite, undirected,
loopless and without multiple edges) graph with vertex set $V=V(G)\neq
\emptyset$ and edge set $E=E(G)$. If $X\subset V$, then $G[X]$ is the subgraph
of $G$ induced by $X$. By $G-W$ we mean the subgraph $G[V-W]$, if $W\subset
V(G)$. We also denote by $G-F$ the subgraph of $G$ obtained by deleting the
edges of $F$, for $F\subset E(G)$, and we write shortly $G-e$, whenever $F$
$=\{e\}$.

The \textit{neighborhood} $N(v)$ of $v\in V\left(  G\right)  $ is the set
$\{w:w\in V\left(  G\right)  $ \textit{and} $vw\in E\left(  G\right)  \}$,
while the \textit{closed neighborhood} $N[v]$\ of $v$ is the set
$N(v)\cup\{v\}$. The \textit{neighborhood} $N(A)$ of $A\subseteq V\left(
G\right)  $ is $\{v\in V\left(  G\right)  :N(v)\cap A\neq\emptyset\}$, and
$N[A]=N(A)\cup A$.

$C_{n},K_{n},P_{n}$ denote respectively, the cycle on $n\geq3$ vertices, the
complete graph on $n\geq1$ vertices, and the path on $n\geq1$ vertices.

The \textit{disjoint union} of the graphs $G_{1},G_{2}$ is the graph
$G_{1}\cup G_{2}$ having the disjoint unions $V(G_{1})\cup V(G_{2})$ and
$E(G_{1})\cup E(G_{2})$ as a vertex set and an edge set, respectively. In
particular, $nG$ denotes the disjoint union of $n>1$ copies of the graph $G$.

An \textit{independent} set in $G$ is a set of pairwise non-adjacent vertices.
An independent set of maximum size is a \textit{maximum independent set} of
$G$, and the \textit{independence number }of $G$, denoted $\alpha(G)$, is the
cardinality of a maximum independent set in $G$.

A graph is \textit{well-covered} if all its maximal independent sets are of
the same size \cite{plum}. If $G$ is well-covered, without isolated vertices,
and $\left\vert V\left(  G\right)  \right\vert =2\alpha\left(  G\right)  $,
then $G$ is a \textit{very well-covered graph} \cite{Favaron1982}. The only
well-covered cycles are $C_{3}$, $C_{4}$, $C_{5}$ and $C_{7}$, while $C_{4}$
is the only very well-covered cycle. A well-covered graph (with at least two
vertices) is $1$-\textit{well-covered} if the deletion of every vertex of the
graph leaves a graph, which is well-covered as well \cite{StaplesThesis}. For
instance, $K_{2}$ is $1$-well-covered, while $P_{4}$ is very well-covered, but
not $1$-well-covered. Notice that $C_{7}$ is well-covered but not
$1$-well-covered. The only $1$-well-covered cycles are $C_{3}$ and $C_{5}$. A
graph $G$ belongs to \textit{class} $W_{2}$ if every two disjoint independent
sets in $G$ are contained in two disjoint maximum independent sets
\cite{StaplesThesis,Staples1979}. Clearly, $W_{1}\supseteq W_{2}$, where
$W_{1}$ is the family of all well-covered graphs.

\begin{theorem}
\cite{StaplesThesis}\label{th4} Let $G$ have no isolated vertices. Then $G$ is
$1$-well-covered if and only if $\ G$ belongs to the class $\mathbf{W}_{2}$.
\end{theorem}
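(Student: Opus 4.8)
The plan is to prove both implications, relying on two preliminary observations valid whenever $G$ is well-covered with no isolated vertices. First, $\alpha(G-v)=\alpha(G)$ for every vertex $v$: choosing a neighbor $u$ of $v$ (which exists, as $G$ has no isolated vertex) and extending $\{u\}$ to a maximal---hence, by well-coveredness, maximum---independent set $M$, we get $u\in M$, so $v\notin M$, whence $\alpha(G-v)\geq|M|=\alpha(G)\geq\alpha(G-v)$. Second, a \emph{swap-out} principle, which is where the $1$-well-covered hypothesis enters: if $G-v$ is well-covered and $M$ is a maximum independent set with $v\in M$, then $M\setminus\{v\}$ is independent in $G-v$ of size $\alpha(G)-1<\alpha(G)=\alpha(G-v)$, so it is not maximal in $G-v$; extending it to a maximal independent set of $G-v$ yields, again by well-coveredness, a maximum independent set of $G$ that avoids $v$, contains $M\setminus\{v\}$, and differs from $M$ only in that $v$ is replaced by a single new vertex $w$.

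For the direction $\mathbf{W}_{2}\Rightarrow 1$-well-covered I would argue as follows. Pairing an arbitrary maximal independent set $S$ with $\emptyset$ and invoking the $\mathbf{W}_{2}$ property produces a maximum independent set containing $S$; maximality of $S$ forces it to equal $S$, so $G$ is well-covered. Next, fix $v$ and let $T$ be any maximal independent set of $G-v$. If $v$ had no neighbor in $T$, then $T$ and $\{v\}$ would be disjoint independent sets, so $\mathbf{W}_{2}$ would place $T$ inside a maximum independent set avoiding $v$; maximality of $T$ in $G-v$ would force that set to equal $T$, making $T\cup\{v\}$ an independent set of size $\alpha(G)+1$, which is impossible. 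Hence $v$ has a neighbor in $T$, so $T$ is maximal in $G$ and $|T|=\alpha(G)$. Thus every maximal independent set of $G-v$ has size $\alpha(G)$, i.e.\ $G-v$ is well-covered; since $G$ has at least two vertices, $G$ is $1$-well-covered.

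For the converse, given disjoint independent sets $A,B$, I would build the two required sets in two symmetric stages: first extend $A$ to a maximum independent set $A^{*}$ with $A^{*}\cap B=\emptyset$, and then extend $B$ to a maximum independent set $B^{*}$ with $B^{*}\cap A^{*}=\emptyset$. Both stages are instances of a single claim $(\dagger)$: an independent set $S$ disjoint from an independent set $F$ extends to a maximum independent set avoiding $F$. I would prove $(\dagger)$ by induction on $|F|$. For $|F|=0$ one extends $S$ to a maximal, hence maximum, independent set. For the step, remove one vertex $f\in F$, obtain by induction a maximum independent set $S^{+}\supseteq S$ avoiding $F\setminus\{f\}$, and, if $f\in S^{+}$, apply the swap-out principle at $f$ (legitimate because $f\notin S$) to replace $S^{+}$ by a maximum independent set $S^{++}\supseteq S$ that avoids $f$ and differs from $S^{+}$ in a single vertex $w$.

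The main obstacle is precisely the control of this swapped-in vertex $w$: the swap-out principle guarantees that $f$ leaves, but it does not prevent the incoming $w$ from lying in $F\setminus\{f\}$, in which case $S^{++}$ still meets $F$ and the induction does not close. Resolving this is the heart of the theorem, and is where the full strength of $1$-well-coveredness---well-coveredness of \emph{every} $G-v$, equivalently $\alpha(G-v)=\alpha(G)$ for all $v$---must be used globally rather than one deletion at a time. I expect to handle it by an alternating, augmenting argument: if $w\in F\setminus\{f\}$, iterate the swap at $w$, generating a sequence $f\to w\to w'\to\cdots$ of vertices exchanged into and out of the current maximum independent set, and show that the well-coveredness of the successive vertex-deleted subgraphs forbids this sequence from being trapped inside $F$, so it must terminate at a vertex outside $F$ and thereby strictly decrease $|S^{*}\cap F|$. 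Equivalently, one may choose $A^{*}$ to minimize $|A^{*}\cap B|$ and prove that a minimizer with nonempty overlap always admits such an exit; ruling out a maximum independent set ``trapped'' against $F$ is the crux, and it is exactly the well-coveredness of the $G-v$ that an example such as $C_{7}$ lacks.
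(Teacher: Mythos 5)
The paper does not prove this theorem at all --- it is quoted directly from Staples' thesis --- so your proposal stands or falls on its own. Your direction $\mathbf{W}_{2}\Rightarrow 1$-well-covered is complete and correct, and your reduction of the converse to the claim $(\dagger)$ (first extend $A$ to a maximum independent set $A^{*}$ avoiding $B$, then extend $B$ to a maximum independent set avoiding $A^{*}$) is sound, as are your two preliminary observations. However, as you yourself admit, you do not prove the key step of $(\dagger)$: you only conjecture that an ``alternating, augmenting argument'' will force the swap sequence out of $F$. As written, the hard direction ($1$-well-covered $\Rightarrow\mathbf{W}_{2}$) is therefore not proved; that is a genuine gap.

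The irony is that the step you describe as the crux follows in one line from facts you already have, with no minimization and no alternating-path machinery. Suppose $M$ is a maximum independent set with $S\subseteq M$ and $f\in M\cap F$, and let $w$ be the vertex swapped in at $f$, so that $\left(M\setminus\{f\}\right)\cup\{w\}$ is independent, $w\neq f$, $w\notin M$. Since $M$ is maximum, it is maximal, so $w$ must have a neighbor in $M$; since $w$ has no neighbor in $M\setminus\{f\}$, that neighbor can only be $f$. Hence $w\in N(f)$. But $f\in F$ and $F$ is independent, so $N(f)\cap F=\emptyset$, and therefore $w\notin F$. Consequently every swap strictly decreases $\left\vert M\cap F\right\vert$ while keeping $M$ maximum and $S\subseteq M$ (note that $f\in F$ implies $f\notin S$), so starting from any maximum independent set $M_{0}\supseteq S$ and performing $\left\vert M_{0}\cap F\right\vert$ swaps you arrive at a maximum independent set containing $S$ and disjoint from $F$. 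This proves $(\dagger)$ and closes your argument; with this observation you do not even need the induction on $\left\vert F\right\vert$, just iteration of the swap. Your worry that $w$ might land in $F\setminus\{f\}$ is precisely what the independence of $F$ rules out. (Your remark about $C_{7}$ is right in spirit, but the failure there is located elsewhere: for $C_{7}$ the swap-out principle itself breaks, since $C_{7}-v\cong P_{6}$ is not well-covered; it is not that the swapped-in vertex gets trapped.)
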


If $G$ has an isolated vertex, then it is contained in all maximum independent
sets, and hence $G$ cannot be in class $\mathbf{W}_{2}$. However, a graph
having isolated vertices may be $1$-well-covered; e.g., $K_{3}\cup K_{1}$.

\begin{theorem}
\cite{LevMan2016b}\label{th1} Let $G$ be a graph without isolated vertices.
Then $G$ is $1$-well-covered if and only if for each non-maximum independent
set $A$ there are at least two disjoint independent sets $B_{1},B_{2}$ such
that $A\cup B_{1},A\cup B_{2}$ are maximum independent sets in $G$.
\end{theorem}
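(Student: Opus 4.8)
The plan is to prove the stated condition equivalent to membership in the class $W_{2}$, after which Theorem~\ref{th4} finishes the job, since $G$ has no isolated vertices. It is convenient to first rephrase the condition: writing $M_{i}=A\cup B_{i}$, the requirement that $A\cup B_{1},A\cup B_{2}$ be maximum with $B_{1}\cap B_{2}=\emptyset$ is the same as saying that $A$ is the intersection $M_{1}\cap M_{2}$ of two maximum independent sets, because $(A\cup B_{1})\cap(A\cup B_{2})=A\cup(B_{1}\cap B_{2})$. So it suffices to show that $G\in W_{2}$ if and only if every non-maximum independent set of $G$ is an intersection of two maximum independent sets.

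For the forward implication, let $A$ be a non-maximum independent set. Since $W_{2}\subseteq W_{1}$, the graph is well-covered, so $A$ extends to a maximum independent set $M_{1}=A\cup B_{1}$ with $B_{1}=M_{1}\setminus A\neq\emptyset$. I then apply the defining property of $W_{2}$ to the two disjoint independent sets $A$ and $B_{1}$, obtaining disjoint maximum independent sets $N_{1}\supseteq A$ and $N_{2}\supseteq B_{1}$. Setting $M_{2}=N_{1}$, we get $M_{2}\cap B_{1}\subseteq N_{1}\cap N_{2}=\emptyset$, whence $M_{1}\cap M_{2}=(A\cup B_{1})\cap M_{2}=A$, as desired. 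This direction is short and is not where the difficulty lies.

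The reverse implication is the heart of the matter. First I would note that the condition forces $G$ to be well-covered (a maximal non-maximum independent set $A$ would equal $M_{1}\cap M_{2}\subseteq M_{1}$ with $A\subsetneq M_{1}$, contradicting maximality) and to have no isolated vertices (an isolated vertex lies in every maximum independent set, so the empty set could never be such an intersection). The key structural lemma is that the condition is inherited by $G-N[v]$ for every vertex $v$: given a non-maximum independent set $B$ of $G-N[v]$, the set $B\cup\{v\}$ is non-maximum independent in $G$, hence equals $M_{1}\cap M_{2}$ for maximum independent sets of $G$; since $v\in M_{1}\cap M_{2}$, the sets $M_{i}\setminus\{v\}$ are maximum independent in $G-N[v]$ with intersection exactly $B$. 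Iterating this yields the condition in $G-N[A]$ for every independent set $A$.

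With this lemma I would induct on $|V(G)|$ to prove $G\in W_{2}$. Given disjoint independent sets $S,T$, the goal is disjoint maximum independent sets $M\supseteq S$ and $M'\supseteq T$. When $S\neq\emptyset$ the inductive hypothesis applies to $G-N[S]$, which is smaller, satisfies the condition, and is therefore in $W_{2}$; feeding its $W_{2}$-property the independent set $T\cap V(G-N[S])$ produces a maximum independent set $M\supseteq S$ with $M\cap T=\emptyset$, and symmetrically $G-N[T]$ yields $M'\supseteq T$ disjoint from $M$. The degenerate situations are handled by the instance $A=\emptyset$ of the condition (which already gives two disjoint maximum independent sets) and, when exactly one of $S,T$ is empty, by replacing it with the singleton of a neighbour of a vertex in the other set, thereby reducing to the generic case. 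I expect the main obstacle to be precisely this reverse direction: establishing the inheritance lemma cleanly and verifying that the empty and maximum degenerate cases of the induction genuinely close.
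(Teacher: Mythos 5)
You should first be aware that this paper never proves Theorem \ref{th1}: it is quoted from \cite{LevMan2016b}, so the only in-paper machinery available is Theorem \ref{th4}, which is exactly what you build on. Your argument is correct. The rephrasing of the condition as ``$A$ is an intersection of two maximum independent sets'' is right; the forward direction via the $W_{2}$-property applied to $A$ and $B_{1}=M_{1}\setminus A$ works; and the reverse direction closes: the condition forces well-coveredness, the inheritance lemma for $G-N[v]$ holds (note $\alpha(G-N[v])\leq\alpha(G)-1$ is automatic, so $B\cup\{v\}$ is indeed non-maximum in $G$, and $M_{i}\setminus\{v\}$, having size $\alpha(G)-1$, must be maximum in $G-N[v]$), and the induction goes through. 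Two points you should make explicit when writing it up: lifting a maximum independent set $M_{2}$ of $G-N[S]$ to $S\cup M_{2}$ and calling it maximum in $G$ uses $\alpha(G-N[S])=\alpha(G)-\left\vert S\right\vert $, which follows from well-coveredness of $G$; and every application of the $W_{2}$-property to a pair containing $\emptyset$ needs either an inclusive reading of Staples' definition or your singleton-of-a-neighbour trick (this also occurs in your forward direction when $A=\emptyset$). Worth knowing: the induction is avoidable. From the condition one can verify $1$-well-coveredness directly: $G$ is well-covered, and if $A$ is maximal in $G-v$ while $A\cup\{v\}$ is independent (hence maximum in $G$), then the condition applied to the non-maximum set $A$ gives disjoint $B_{1},B_{2}$ with $A\cup B_{i}$ maximum; at least one of $B_{1},B_{2}$ avoids $v$, and any $u\in B_{i}\setminus A$ for that $i$ enlarges $A$ inside $G-v$, a contradiction. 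Hence every maximal independent set of $G-v$ is maximal in $G$, so maximum, and $G-v$ is well-covered for every $v$. That direct route needs Theorem \ref{th4} only for necessity and skips the induction with its degenerate cases; your route is heavier but yields genuinely more, namely that the condition is hereditary under deleting closed neighbourhoods and is equivalent to membership in $W_{2}$.
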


Let $s_{k}$ be the number of independent sets of size $k$ in a graph $G$. The
polynomial
\[
I(G;x)=s_{0}+s_{1}x+s_{2}x^{2}+\cdots+s_{\alpha}x^{\alpha},\quad\alpha
=\alpha\left(  G\right)  ,
\]
is called the \textit{independence polynomial} of $G$ \cite{GutHar}. For a
survey on independence polynomials of graphs see \cite{LevManGreece}. Closed
formulae for $I(G;x)$ of several families of graphs one can find in
\cite{LevMan2012,Zhu}, while some factorizations of independence polynomials
for certain classes of graphs are given in \cite{Wang}.

A polynomial is called unimodal if the sequence $(a_{0},a_{1},a_{2}%
,...,a_{n})$ of its coefficients is \textit{unimodal}, i.e., if there exists
an index $k\in\{0,1,...,n\}$, such that
\[
a_{0}\leq\cdots\leq a_{k-1}\leq a_{k}\geq a_{k+1}\geq\cdots\geq a_{n}.
\]
In \cite{AlMalSchErdos} it is proved that for every permutation $\sigma$ of
$\{1,2,...,\alpha\}$ there is a graph $G$ with $\alpha(G)=\alpha$ such that
the coefficients of $I(G;x)\ $satisfy $s_{\sigma(1)}<s_{\sigma(2)}%
<...<s_{\sigma(\alpha)}$.

\begin{theorem}
\cite{LevMan2008,Michael Travis}\label{th5} If $G$ is a well-covered graph,
then $s_{0}\leq s_{1}\leq\cdots\leq s_{\left\lceil \frac{\alpha(G)}%
{2}\right\rceil }$.
\end{theorem}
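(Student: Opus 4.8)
The plan is to prove the stronger ratio inequality $(k+1)\,s_{k+1}\geq(\alpha-k)\,s_{k}$ for every $k$ with $0\leq k<\alpha$, and then to observe that the multiplier $(\alpha-k)/(k+1)$ is at least $1$ precisely when $k+1\leq\left\lceil \frac{\alpha}{2}\right\rceil $, which yields the chain $s_{0}\leq s_{1}\leq\cdots\leq s_{\left\lceil \frac{\alpha}{2}\right\rceil }$.

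First I would set up a double count of the incidence pairs
\[
P=\{(A,B):A\text{ independent},\ |A|=k,\ B\text{ independent},\ |B|=k+1,\ A\subseteq B\}.
\]
Counting by the larger element, each independent set $B$ of size $k+1$ has exactly $k+1$ subsets of size $k$, and every subset of an independent set is independent, so $B$ contributes exactly $k+1$ pairs; hence $\left\vert P\right\vert =(k+1)\,s_{k+1}$. Counting by the smaller element, for a fixed independent set $A$ of size $k$ the sets $B$ with $A\subseteq B$ correspond bijectively to the vertices $v\in V\setminus N[A]$, since $A\cup\{v\}$ is independent of size $k+1$ exactly when $v$ lies outside $A$ and is adjacent to no vertex of $A$; thus $A$ contributes $\left\vert V\setminus N[A]\right\vert $ pairs, giving $\left\vert P\right\vert =\sum_{|A|=k}\left\vert V\setminus N[A]\right\vert $.

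The crux, and the only place where the hypothesis is used, is a uniform lower bound on $\left\vert V\setminus N[A]\right\vert $. Since $G$ is well-covered, every maximal independent set has size $\alpha$, so the independent set $A$ extends to a maximum independent set $M\supseteq A$ with $\left\vert M\right\vert =\alpha$; the $\alpha-k$ vertices of $M\setminus A$ all lie in $V\setminus N[A]$, whence $\left\vert V\setminus N[A]\right\vert \geq\alpha-k$. Summing over all $s_{k}$ independent sets $A$ of size $k$ and equating the two counts yields $(k+1)\,s_{k+1}\geq(\alpha-k)\,s_{k}$. I expect this extension step to be the conceptual heart of the argument: it is exactly where the definition of well-covered upgrades a merely maximal extension of $A$ into one of the full size $\alpha$, and without well-coveredness this bound, and hence the conclusion, can fail.

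Finally, I would close the argument by checking the arithmetic of the multiplier. If $k+1\leq\left\lceil \frac{\alpha}{2}\right\rceil $, then in both parities one gets $\alpha\geq 2k+1$, so $\alpha-k\geq k+1\geq 1$ and therefore $s_{k+1}\geq s_{k}$. Iterating this over $k=0,1,\ldots,\left\lceil \frac{\alpha}{2}\right\rceil -1$ produces the desired non-decreasing initial segment, completing the proof.
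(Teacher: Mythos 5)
Your proof is correct, and it is essentially the same argument this paper relies on: the paper states Theorem \ref{th5} as a cited result rather than proving it, but your double count of containment pairs is exactly the bipartite-graph technique (the graph $H_{k}\left(  G\right)  $, with $\left\vert Y\right\vert =(k+1)\cdot s_{k+1}$ counted from the $\Omega_{k+1}$ side and vertex degrees bounded from the $\Omega_{k}$ side) used in the proofs of Theorems \ref{th13} and \ref{th3}. Your key step---extending an independent set $A$ of size $k$ to a maximum independent set, which well-coveredness guarantees, so that $A$ has degree at least $\alpha-k$ in $H_{k}\left(  G\right)  $---is precisely the well-covered analogue of the paper's use of Theorem \ref{th1} to obtain degree at least $2\left(  \alpha-k\right)  $ in the $1$-well-covered case.
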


Several results concerning the independence polynomials of well-covered graphs
are presented in \cite{Brown,LevMan2003a,LevMan2003b,LevMan2004}. It is known
that there exist well-covered graphs whose independence polynomials are not
unimodal \cite{LevMan2006b,Michael Travis}.

\begin{conjecture}
[Roller-Coaster Conjecture]\cite{Michael Travis} For every permutation
$\sigma$ of the set $\{\left\lceil \frac{\alpha}{2}\right\rceil ,...,\alpha\}$
there is a well-covered graph $G$ with $\alpha(G)=\alpha$ such that the
coefficients of $I(G;x)\ $satisfy $s_{\sigma(\left\lceil \frac{\alpha}%
{2}\right\rceil )}<s_{\sigma(\left\lceil \frac{\alpha}{2}\right\rceil
+1)}<\cdots<s_{\sigma(\alpha)}$.
\end{conjecture}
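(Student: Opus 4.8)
The plan is to realize every prescribed ordering using a single, highly structured family of well-covered graphs, namely disjoint unions of complete graphs, and then to translate the whole problem into one about elementary symmetric polynomials.

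First I would restrict attention to graphs of the form $G=K_{n_{1}}\cup K_{n_{2}}\cup\cdots\cup K_{n_{\alpha}}$ with positive integers $n_{i}$. Such a $G$ is well-covered: a set is a maximal independent set exactly when it contains precisely one vertex from each clique, so every maximal independent set has size $\alpha$ and $\alpha(G)=\alpha$. Since the disjoint union multiplies independence polynomials, $I(G;x)=\prod_{i=1}^{\alpha}(1+n_{i}x)$, and therefore $s_{k}=e_{k}(n_{1},\dots,n_{\alpha})$, the $k$-th elementary symmetric polynomial of the clique sizes. The lower half of this coefficient sequence is then automatically non-decreasing up to the middle index, as guaranteed by Theorem \ref{th5}, so no target order can conflict with it.

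With this reduction, proving the conjecture amounts to the following purely combinatorial statement: for every permutation $\sigma$ of $\{\lceil\frac{\alpha}{2}\rceil,\dots,\alpha\}$ there exist positive integers $n_{1},\dots,n_{\alpha}$ with $e_{\sigma(\lceil\alpha/2\rceil)}<e_{\sigma(\lceil\alpha/2\rceil+1)}<\cdots<e_{\sigma(\alpha)}$. To establish it I would use a scale-separation argument: split the clique sizes into a few groups, some equal to a large parameter $N$ and the rest small, so that $I(G;x)$ becomes a product of $(1+Nx)^{a}(1+x)^{b}$-type factors whose coefficients are explicit binomial expressions in $a$, $b$, and $N$. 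Choosing the multiplicities and the magnitude of $N$ then lets me force one chosen top-half coefficient to dominate, and an induction on $\alpha$, peeling off the intended maximum of the target order one step at a time, would let me build up the full strict chain.

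The hard part will be this last step: controlling the relative sizes of the top-half elementary symmetric polynomials simultaneously. There is no operation that changes a single $e_{k}$ in isolation, so every adjustment of the $n_{i}$ perturbs neighboring coefficients as well; the crux is to arrange the scales of the $n_{i}$ so that the dominance argument secures each inequality without disturbing those already fixed, all while keeping the $n_{i}$ positive integers and $\alpha(G)$ exactly equal to $\alpha$. The rigidity that Theorem \ref{th5} imposes on the lower half is not itself an obstacle, since the target order only constrains the indices from $\lceil\frac{\alpha}{2}\rceil$ upward, but making the scale bookkeeping cover \emph{every} permutation uniformly is where the real work lies.
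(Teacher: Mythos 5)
Your reduction to disjoint unions of complete graphs fails at the very first step, for a structural reason that no amount of scale separation or induction can repair. For $G=K_{n_{1}}\cup\cdots\cup K_{n_{\alpha}}$ you correctly obtain $I(G;x)=\prod_{i=1}^{\alpha}(1+n_{i}x)$, but this polynomial has all of its roots real (namely $-1/n_{i}$), so by Newton's inequalities its coefficient sequence is log-concave, i.e., $s_{k}^{2}\geq s_{k-1}s_{k+1}$ for all $k$, and in particular unimodal. The Roller-Coaster Conjecture, however, demands orderings that are incompatible with log-concavity. Take $\alpha=4$, so the index set is $\{2,3,4\}$, and the permutation $\sigma$ with $\sigma(2)=3$, $\sigma(3)=2$, $\sigma(4)=4$; the required chain is $s_{3}<s_{2}<s_{4}$, which forces $s_{3}^{2}<s_{2}s_{4}$, contradicting $s_{3}^{2}\geq s_{2}s_{4}$. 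Hence no choice of positive integers $n_{1},\dots,n_{\alpha}$ can realize this permutation, and the same objection applies to every $\alpha\geq4$ and every target order containing a valley. The class of graphs you restricted to is precisely a class for which the conjecture's hard cases are provably impossible; the ``hard part'' you flag (bookkeeping of scales) is not where the proposal breaks --- it is the reduction itself.

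For context: this statement is not proved in the paper at all; it is quoted as a conjecture from \cite{Michael Travis} and cited as having been settled in \cite{CuPe2017}. What makes it genuinely difficult is exactly the phenomenon above: one must produce well-covered graphs whose independence polynomials are far from log-concave in the upper range, and it is known that such graphs exist but require more than unions of cliques \cite{LevMan2006b,Michael Travis}. The constructions that work (in \cite{Michael Travis} for small $\alpha$, and in the Cutler--Pebody proof \cite{CuPe2017}) do start from a vertex set partitioned into $\alpha$ cliques, but then add edges \emph{between} the cliques; removing selected transversals from the family of independent sets is what decouples the top-half coefficients from one another and escapes the Newton constraints. If you want to salvage your plan, that is the missing idea: keep the clique partition (which preserves well-coveredness when done carefully) but use cross edges, not clique sizes alone, as the tuning parameters.
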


The Roller-Coaster Conjecture has been verified for well-covered graphs $G$
having $\alpha(G)$ $\leq7$ \cite{Michael Travis}, and later for $\alpha(G)$
$\leq11$ \cite{Matchett}. In the case of very well-covered graphs, the domain
of the Roller-Coaster Conjecture can be shortened to $\{\left\lceil
\frac{\alpha}{2}\right\rceil ,\left\lceil \frac{\alpha}{2}\right\rceil
+1,...,\left\lceil \frac{2\alpha-1}{3}\right\rceil \}$, where $\alpha$ stands
for the independence number \cite{LevMan2006a}. Recently, the Roller-Coaster
Conjecture was validated in \cite{CuPe2017}.

In this paper we show that the domain of the Roller-Coaster Conjecture can be
shortened to:

\begin{itemize}
\item $\{\left\lceil \frac{\alpha}{2}\right\rceil ,\left\lfloor \frac{\alpha
}{2}\right\rfloor +1,...,\min\left\{  \alpha,\left\lceil \frac{n-1}%
{3}\right\rceil \right\}  \}$ for well-covered graphs of order $n$;

\item $\{\left\lceil \frac{2\alpha}{3}\right\rceil ,\left\lceil \frac{2\alpha
}{3}\right\rceil +1,...,\min\left\{  \alpha,\left\lceil \frac{n-1}%
{3}\right\rceil \right\}  \}$ for $1$-well-covered graphs of order $n$.
\end{itemize}

Actually, $\min\left\{  \alpha,\left\lceil \frac{n-1}{3}\right\rceil \right\}
<\alpha$ only for $n\leq3\alpha-2$. It means that one may formulate an
overhauled Roller-Coaster Conjecture as follows.

\begin{conjecture}
Let $\alpha\geq2$ and $n\geq4$ be integers satisfying $2\alpha\leq
n\leq3\alpha-2$. Then for every permutation $\sigma$ of the set $\{\left\lceil
\frac{\alpha}{2}\right\rceil ,\left\lceil \frac{\alpha}{2}\right\rceil
+1,...,\left\lceil \frac{n-1}{3}\right\rceil \}$ there exists a well-covered
graph $G$ with $\alpha(G)=\alpha$ and $\left\vert V(G)\right\vert =n$ such
that the coefficients of $I(G;x)\ $satisfy $s_{\sigma(\left\lceil \frac
{\alpha}{2}\right\rceil )}<s_{\sigma(\left\lceil \frac{\alpha}{2}\right\rceil
+1)}<\cdots<s_{\sigma(\left\lceil \frac{n-1}{3}\right\rceil )}$.
\end{conjecture}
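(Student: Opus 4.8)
The plan is to decouple the statement into a \emph{range-reduction} step and a \emph{realization} step. By Theorem~\ref{th5} the coefficients already increase up to index $\lceil\alpha/2\rceil$, and the main monotonicity result of this paper forces the coefficients to be non-increasing once $k\geq\lceil(n-1)/3\rceil$, for any well-covered (hence $1$-quasi-regularizable) graph of order $n$. Consequently, once $|V(G)|=n$ and $\alpha(G)=\alpha$ are fixed, the behaviour of $(s_k)$ outside the window $\{\lceil\alpha/2\rceil,\dots,\lceil(n-1)/3\rceil\}$ is \emph{predetermined}, so it suffices to realize the permutation $\sigma$ on that window alone. First I would therefore restate the target as: construct a well-covered $G$ with $|V(G)|=n$, $\alpha(G)=\alpha$, and $s_{\sigma(\lceil\alpha/2\rceil)}<\cdots<s_{\sigma(\lceil(n-1)/3\rceil)}$, disregarding all other coefficients.

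For the realization I would build on the constructions that settled the Roller-Coaster Conjecture in \cite{CuPe2017}, together with the two multiplicative tools that make independence polynomials tractable: the product rule $I(G_1\cup G_2;x)=I(G_1;x)\,I(G_2;x)$ for disjoint unions, and the corona substitution $I(G\circ K_1;x)=(1+x)^{|V(G)|}I\!\left(G;\frac{x}{1+x}\right)$, which turns any graph into a very well-covered one, doubling its order and fixing its independence number to $|V(G)|$. The strategy is to assemble $G$ from gadgets, each chiefly perturbing a single target coefficient $s_j$ with $j$ in the window, and to layer them so that the accumulated weights realize the strict order dictated by $\sigma$. Because well-coveredness and membership in $\mathbf{W}_{2}$ are preserved under disjoint unions and under the corona operation (the keyword ``corona'' in the abstract suggests this is the intended vehicle), the assembled graph remains well-covered throughout.

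The delicate part is pinning the order to \emph{exactly} $n$ with $2\alpha\leq n\leq 3\alpha-2$ while leaving the realized order on the window intact. Here I would pad with components of differing ``vertex cost'': disjoint copies of $K_{2}$ cost $2$ vertices per unit of independence number (driving $n$ toward $2\alpha$), whereas disjoint copies of $K_{3}$ cost $3$ vertices per unit (driving $n$ toward $3\alpha-2$), so a suitable proportion of the two lands $n$ at any prescribed value in $[2\alpha,3\alpha-2]$ with the correct $\alpha$. Since such a padding factor multiplies $I(G;x)$ by $(1+2x)$ or $(1+3x)$, I would track the induced transformation $s_k\mapsto s_k+m\,s_{k-1}$ and choose the padding so that it acts uniformly enough on the window to preserve the strict inequalities prescribed by $\sigma$.

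The main obstacle is the tightness of the vertex budget: with $n\leq 3\alpha-2$ there are only $O(\alpha)$ vertices available, so the ``dynamic range'' of the coefficients $s_j$ in the window is severely constrained, and it is exactly this scarcity that must still be shown to admit an arbitrary \emph{strict} separation of the $\lceil(n-1)/3\rceil-\lceil\alpha/2\rceil+1$ middle coefficients. Reconciling simultaneously (i) the arbitrary permutation, (ii) the exact pair $(n,\alpha)$, and (iii) the preservation of well-coveredness under so few vertices is where I expect an inductive gadget design --- analogous to, but more economical than, the one in \cite{CuPe2017} --- to be required, and the careful bookkeeping of $\alpha$ and of the $\mathbf{W}_{2}$ property through each gluing step is the part most likely to resist a clean argument.
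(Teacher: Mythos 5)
You are attempting to prove a statement that the paper itself does not prove: it is the authors' \emph{overhauled Roller-Coaster Conjecture}, stated as an open problem. What the paper establishes is only the range-reduction half of your first paragraph, namely Theorem \ref{th5} together with Corollary \ref{cor2}: for a well-covered graph of order $n$ the coefficients are non-decreasing up to $\left\lceil \frac{\alpha}{2}\right\rceil$ and non-increasing from $\left\lceil \frac{n-1}{3}\right\rceil$ on, so any chaotic behaviour is confined to the window $\{\left\lceil \frac{\alpha}{2}\right\rceil ,\ldots,\left\lceil \frac{n-1}{3}\right\rceil \}$. The realization step --- producing, for each admissible pair $(n,\alpha)$ and each permutation $\sigma$ of that window, a well-covered graph with \emph{exactly} that order and independence number realizing $\sigma$ --- is precisely the content of the conjecture, and neither the paper nor your proposal supplies it. Your own closing paragraph concedes that the key gadget construction is ``expected'' rather than given, so what you have is a research plan, not a proof.

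Beyond incompleteness, the plan has a concrete internal obstruction. Any graph from \cite{CuPe2017} realizing a permutation with an ascent among the top indices must have $n_{0}\geq3\alpha_{0}-1$; otherwise Corollary \ref{cor2}\emph{(ii)} would force its tail to be non-increasing. To bring such a gadget into the regime $n\leq3\alpha-2$ your padding scheme must therefore add many copies of $K_{2}$ (the only padding that lowers the vertex-to-independence ratio), i.e., multiply the independence polynomial by a high power $(1+2x)^{a}$. But convolution with large powers of a real-rooted binomial factor drives the coefficient sequence toward log-concavity and unimodality, which works directly \emph{against} the strict zigzag you must preserve; your hope that the padding ``acts uniformly enough on the window'' is exactly what fails. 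A second structural problem is circularity: the window endpoints $\left\lceil \frac{\alpha}{2}\right\rceil$ and $\left\lceil \frac{n-1}{3}\right\rceil$ are determined only by the \emph{final} $(n,\alpha)$, after padding, so the base gadget is asked to realize a permutation of an index set it cannot know in advance. Reconciling the tight vertex budget with prescribed non-monotonicity is the genuine open difficulty, and the proposal leaves it untouched.
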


\section{Results}

We call $G$ a $\lambda$-\textit{quasi-regularizable graph} if $\lambda>0$ and
$\lambda\cdot\left\vert S\right\vert \leq\left\vert N\left(  S\right)
\right\vert $ is true for every independent set $S$ of $G$. If $\lambda=1$,
then $G$ is a \textit{quasi-regularizable} graph \cite{Berge1982}.

For a graph $G$ and $1\leq k<\alpha(G)$, let $\Omega_{k}\left(  G\right)
=\{W:\left\vert W\right\vert =k,W$ \textit{is independent in }$G\}$ and
$H_{k}\left(  G\right)  =(\Omega_{k}\left(  G\right)  ,\Omega_{k+1}\left(
G\right)  ,Y)$ be the bipartite graph with bipartition $\left\{  \Omega
_{k}\left(  G\right)  ,\Omega_{k+1}\left(  G\right)  \right\}  $ and such that
$WU\in Y$ if and only if $W\subset U$. It is clear that $\left\vert \Omega
_{k}\right\vert =s_{k}$.

\begin{theorem}
\label{th13}If $G$ is a $\lambda$-quasi-regularizable graph of order $n$, then
the following assertions are true:

\emph{(i) }$(k+1)\cdot s_{k+1}\leq(n-\left(  \lambda+1\right)  k)\cdot
s_{k},0\leq k<\alpha\left(  G\right)  $;

\emph{(ii) }$s_{r}\geq s_{r+1}\geq\cdots\geq s_{\alpha\left(  G\right)  }$,
for $r=\left\lceil \frac{n-1}{\lambda+2}\right\rceil .$
\end{theorem}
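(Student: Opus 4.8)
The plan is to prove (i) by a double count of the edges of the bipartite graph $H_{k}(G)=(\Omega_{k},\Omega_{k+1},Y)$ introduced above, and then to deduce (ii) from (i) by locating the index past which the recurrence forces the ratio $s_{k+1}/s_{k}$ below $1$. Counting the edges from the $\Omega_{k+1}$ side: every independent set $U$ of size $k+1$ has exactly $k+1$ subsets of size $k$, each of them again independent, so $U$ has degree $k+1$ in $H_{k}(G)$, and the edge total is $(k+1)\cdot s_{k+1}$. Counting from the $\Omega_{k}$ side: an independent set $W$ of size $k$ is joined to exactly those $U\in\Omega_{k+1}$ obtained by adjoining a single vertex $v\notin N[W]$; hence the degree of $W$ equals $n-\left\vert N[W]\right\vert$. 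As $W$ is independent, $W\cap N(W)=\emptyset$, so $\left\vert N[W]\right\vert=k+\left\vert N(W)\right\vert$.

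This is where the quasi-regularizability hypothesis enters: applying $\lambda\cdot\left\vert W\right\vert\leq\left\vert N(W)\right\vert$ to the independent set $W$ gives $\left\vert N(W)\right\vert\geq\lambda k$, so the degree of $W$ is at most $n-k-\lambda k=n-(\lambda+1)k$. Summing this bound over the $s_{k}$ vertices of $\Omega_{k}$ and equating the two edge counts gives
\[
(k+1)\cdot s_{k+1}=\sum_{W\in\Omega_{k}}\deg_{H_{k}}(W)\leq\left(n-(\lambda+1)k\right)\cdot s_{k},
\]
which is exactly (i); when $n-(\lambda+1)k<0$ the inequality simply forces $s_{k+1}=0$, in agreement with the claim.

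For (ii), I would note that $s_{k+1}\leq s_{k}$ is guaranteed as soon as the factor $n-(\lambda+1)k$ on the right of (i) does not exceed $k+1$. Solving $n-(\lambda+1)k\leq k+1$ yields $k\geq\frac{n-1}{\lambda+2}$, which holds for every integer $k\geq r=\left\lceil\frac{n-1}{\lambda+2}\right\rceil$. For any such $k$, part (i) gives $(k+1)s_{k+1}\leq(n-(\lambda+1)k)s_{k}\leq(k+1)s_{k}$, and dividing by $k+1>0$ produces $s_{k+1}\leq s_{k}$. Chaining these inequalities over $r\leq k<\alpha(G)$ gives $s_{r}\geq s_{r+1}\geq\cdots\geq s_{\alpha(G)}$.

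The only delicate steps are bookkeeping: confirming that each $(k+1)$-subset contributes exactly $k+1$ independent $k$-subsets, that $W$ is disjoint from $N(W)$ so that $\left\vert N[W]\right\vert=k+\left\vert N(W)\right\vert$, and that the ceiling in the threshold $r$ is precisely the integer cutoff at which the comparison $n-(\lambda+1)k\leq k+1$ turns on. I expect no genuine obstacle: the entire content lies in arranging the two-way edge count and inserting the defining inequality of $\lambda$-quasi-regularizability at the right moment.
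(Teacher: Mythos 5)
Your proposal is correct and follows essentially the same route as the paper: a double count of the edges of $H_{k}(G)$, with the degree of each $W\in\Omega_{k}(G)$ bounded above by $n-(\lambda+1)k$ via the quasi-regularizability inequality applied to $N[W]$, and part (ii) obtained by locating the integer threshold $k\geq\left\lceil \frac{n-1}{\lambda+2}\right\rceil$ where the ratio in (i) drops to at most $1$. The extra bookkeeping you flag (e.g., the case $n-(\lambda+1)k<0$) is harmless and does not change the argument.
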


\begin{proof}
Every $U\in\Omega_{k+1}\left(  G\right)  $ has $k+1$ subsets in $\Omega
_{k}\left(  G\right)  $, which means that the degree of every vertex $U$ in
$H_{k}\left(  G\right)  $ is equal to $k+1$. Consequently, we obtain
\[
\left\vert Y\right\vert =(k+1)\cdot\left\vert \Omega_{k+1}\left(  G\right)
\right\vert =(k+1)\cdot s_{k+1}.
\]

Every $W\in\Omega_{k}\left(  G\right)  $ may be extended to some $U\in
\Omega_{k+1}\left(  G\right)  $ by means of a vertex belonging to $V\left(
G\right)  -N\left[  W\right]  $. Since $G$ is a $\lambda$-quasi-regularizable,
we have
\[
\left\vert N[W]\right\vert =\left\vert W\cup N(W)\right\vert \geq\left(
\lambda+1\right)  \cdot\left\vert W\right\vert ,
\]
and hence,
\[
(k+1)\cdot s_{k+1}\leq\left(  n-\left(  \lambda+1\right)  k\right)  \cdot
s_{k}.
\]
Therefore, we get
\[
s_{k+1}\leq\frac{n-\left(  \lambda+1\right)  k}{k+1}\cdot s_{k},
\]
which implies $s_{k+1}\leq s_{k}$ for every $k$ satisfying
\[
\frac{n-\left(  \lambda+1\right)  \cdot k}{k+1}\leq1\Leftrightarrow
k\geq\left\lceil \frac{n-1}{\lambda+2}\right\rceil ,
\]
as claimed.
\end{proof}

In particular, for $\lambda=1$, we deduce the following.

\begin{corollary}
\label{cor3}Let $G$ be a quasi-regularizable graph of order $n\geq2$ with
$\alpha(G)=\alpha$. Then

\emph{(i)} $\left(  k+1\right)  \cdot s_{k+1}\leq\left(  n-2k\right)  \cdot
s_{k},1\leq k<\alpha$;

\emph{(ii) }$s_{\left\lceil \frac{n-1}{3}\right\rceil }\geq s_{\left\lceil
\frac{n-1}{3}\right\rceil +1}\geq\cdots\geq s_{\alpha}$.
\end{corollary}

Taking into account Theorem \ref{th5}, Corollary \ref{cor3}, and the fact that
every well-covered graph is quasi-regularizable \cite{Berge1982}, we arrive at
the following.

\begin{corollary}
\label{cor2}Let $G$ be a well-covered graph of order $n\geq2$ with
$\alpha(G)=\alpha$. Then

\emph{(i)} $\left(  \alpha-k\right)  \cdot s_{k}\leq\left(  k+1\right)  \cdot
s_{k+1}\leq\left(  n-2k\right)  \cdot s_{k},1\leq k<\alpha$;

\emph{(ii) }$s_{0}\leq s_{1}\leq\cdots\leq s_{\left\lceil \frac{\alpha}%
{2}\right\rceil }$ and $s_{\left\lceil \frac{n-1}{3}\right\rceil }\geq
s_{\left\lceil \frac{n-1}{3}\right\rceil +1}\geq\cdots\geq s_{\alpha}$.
\end{corollary}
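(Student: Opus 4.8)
The plan is to establish (i) first, since (ii) then follows either by re-running the threshold analysis of Theorem \ref{th13} or by invoking the cited results directly. The right-hand inequality of (i), namely $(k+1)s_{k+1} \leq (n-2k)s_k$, is immediate: every well-covered graph is quasi-regularizable \cite{Berge1982}, so this is exactly Corollary \ref{cor3}(i).

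The substance lies in the left-hand inequality $(\alpha-k)s_k \leq (k+1)s_{k+1}$, and here I would reuse the bipartite incidence graph $H_k(G) = (\Omega_k(G), \Omega_{k+1}(G), Y)$ from the proof of Theorem \ref{th13}. Counting the edge set $Y$ from the $\Omega_{k+1}$-side gives $|Y| = (k+1)s_{k+1}$ exactly as before, since every $(k+1)$-element independent set has precisely $k+1$ subsets of size $k$. The new ingredient is a lower bound obtained by counting from the $\Omega_k$-side. The degree of a vertex $W \in \Omega_k(G)$ in $H_k(G)$ equals the number of vertices $v \in V(G) - N[W]$, because adjoining any such $v$ to the independent set $W$ yields a member of $\Omega_{k+1}(G)$. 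Since $G$ is well-covered and $|W| = k < \alpha$, the set $W$ is not maximal and extends to a maximum independent set $S$ with $|S| = \alpha$; the $\alpha - k$ vertices of $S - W$ all lie in $V(G) - N[W]$, so $\deg_{H_k}(W) \geq \alpha - k$. Summing over all $W \in \Omega_k(G)$ yields $|Y| \geq (\alpha-k)s_k$, and comparing the two counts gives the claimed inequality.

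For (ii), the first chain $s_0 \leq s_1 \leq \cdots \leq s_{\left\lceil \frac{\alpha}{2}\right\rceil}$ is exactly Theorem \ref{th5}; alternatively it drops out of the left inequality of (i), since $\frac{\alpha-k}{k+1} \geq 1$ precisely when $k \leq \frac{\alpha-1}{2}$, which forces $s_{k+1} \geq s_k$ throughout the range $0 \leq k \leq \left\lceil \frac{\alpha}{2}\right\rceil - 1$. The second chain $s_{\left\lceil \frac{n-1}{3}\right\rceil} \geq \cdots \geq s_{\alpha}$ is just Corollary \ref{cor3}(ii) applied to $G$, again via quasi-regularizability.

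I expect the only real obstacle to be spotting the correct lower-bound count for the left inequality: the argument is dual to the one already used for the upper bound in Theorem \ref{th13}, with the well-covered hypothesis supplying, for each small independent set, a guaranteed supply of at least $\alpha - k$ distinct extending vertices. Once this degree bound is in hand, everything else is bookkeeping with results proved earlier in the excerpt.
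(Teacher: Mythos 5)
Your proposal is correct, and it is worth noting that it is actually more self-contained than the paper's own treatment. The paper gives no standalone proof of Corollary \ref{cor2}: it simply declares the corollary to follow from Theorem \ref{th5}, Corollary \ref{cor3}, and Berge's theorem that every well-covered graph is quasi-regularizable. You use the same combination for the right-hand inequality of \emph{(i)} and for both chains in \emph{(ii)}, but you additionally supply an explicit double-counting proof of the left-hand inequality $\left(\alpha-k\right)\cdot s_{k}\leq\left(k+1\right)\cdot s_{k+1}$, via the incidence graph $H_{k}\left(G\right)$ and the observation that well-coveredness forces every $W\in\Omega_{k}\left(G\right)$ to have degree at least $\alpha-k$ (extend $W$ to a maximal, hence maximum, independent set). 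This is exactly the lemma underlying Theorem \ref{th5} in the cited sources (Michael--Traves and Levit--Mandrescu), and it mirrors the paper's own proof of Theorem \ref{th3}\emph{(i)}, where $1$-well-coveredness yields the stronger degree bound $2\left(\alpha-k\right)$ via Theorem \ref{th1}. What your explicit argument buys is the closing of a small logical gap: Theorem \ref{th5} as stated asserts only the monotone chain $s_{0}\leq\cdots\leq s_{\left\lceil \alpha/2\right\rceil }$, which does not by itself imply the left-hand inequality of \emph{(i)} for all $1\leq k<\alpha$; one must either re-derive the degree-counting lemma, as you do, or cite it from the original papers rather than from the statement of Theorem \ref{th5}.
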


Combining Theorem \ref{th5} and Corollary \ref{cor2}, we infer that for
well-covered graphs, the domain of the Roller-Coaster Conjecture can be
shortened to $\{\left\lceil \frac{\alpha}{2}\right\rceil ,\left\lceil
\frac{\alpha}{2}\right\rceil +1,...,\left\lceil \frac{n-1}{3}\right\rceil \}$,
whenever $2\leq\alpha$ and $4\leq n\leq3\alpha-2$.

Since each very well-covered graph is of order twice its independence number,
we obtain the following.

\begin{corollary}
\cite{LevMan2006a}\label{cor1} If $G$ is a very well-covered graph of order
$n\geq2$ with $\alpha(G)=\alpha$, then\emph{ }$s_{0}\leq s_{1}\leq\cdots\leq
s_{\left\lceil \frac{\alpha}{2}\right\rceil }$ and $s_{\left\lceil
\frac{2\alpha-1}{3}\right\rceil }\geq s_{\left\lceil \frac{2\alpha-1}%
{3}\right\rceil +1}\geq\cdots\geq s_{\alpha}$.
\end{corollary}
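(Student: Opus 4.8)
The plan is to obtain the statement as a direct specialization of Corollary \ref{cor2} to the class of very well-covered graphs. First I would recall the defining properties: a very well-covered graph $G$ is well-covered, has no isolated vertices, and satisfies $\left\vert V(G)\right\vert =2\alpha(G)$; hence $n=2\alpha$. In particular $G$ is well-covered, so every hypothesis of Corollary \ref{cor2} holds and that result may be applied without modification.

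For the increasing half, Corollary \ref{cor2}(ii) already delivers $s_{0}\leq s_{1}\leq\cdots\leq s_{\left\lceil \frac{\alpha}{2}\right\rceil }$ verbatim; this half uses only well-coveredness (equivalently Theorem \ref{th5}) and requires no additional argument. For the decreasing half, Corollary \ref{cor2}(ii) gives $s_{\left\lceil \frac{n-1}{3}\right\rceil }\geq s_{\left\lceil \frac{n-1}{3}\right\rceil +1}\geq\cdots\geq s_{\alpha}$, and substituting $n=2\alpha$ into the threshold index turns $\left\lceil \frac{n-1}{3}\right\rceil $ into $\left\lceil \frac{2\alpha-1}{3}\right\rceil $, producing exactly the claimed chain $s_{\left\lceil \frac{2\alpha-1}{3}\right\rceil }\geq\cdots\geq s_{\alpha}$.

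Since the whole argument reduces to the single substitution $n=2\alpha$, there is no genuine obstacle here: the analytic content is carried entirely by Theorem \ref{th5} and by Theorem \ref{th13} (through Corollary \ref{cor2}), together with the fact that every well-covered graph is quasi-regularizable. The only point worth a sentence of care is that the order hypothesis $n\geq2$ of Corollary \ref{cor2} is inherited, and that in the boundary case $\alpha=1$ (so $n=2$) both chains degenerate to trivially true inequalities, so the statement holds there as well. I would close by remarking that this recovers the result of \cite{LevMan2006a} as an immediate consequence of the more general $\lambda$-quasi-regularizable framework.
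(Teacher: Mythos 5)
Your proposal is correct and matches the paper's own derivation exactly: the paper also obtains Corollary \ref{cor1} by specializing Corollary \ref{cor2} via the substitution $n=2\alpha$, which is valid since every very well-covered graph is well-covered and has order twice its independence number. Your extra remark about the degenerate case $\alpha=1$ is a harmless bonus not present in the paper.
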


Clearly, $nK_{2}$ is $1$-well-covered for $n\geq1$, and has exactly
$2\alpha(G)$ vertices, while each graph $G\in$ $\left\{  C_{5}\cup
nK_{2},C_{3}\cup nK_{2}\right\}  ,n\geq1$, is $1$-well-covered and has exactly
$2\alpha(G)+1$ vertices. One can show that $C_{3}$ and $C_{5}$ are the only
connected $1$-well-covered graphs with exactly $2\alpha(G)+1$ vertices
\cite{LevMan2016b}.

\begin{proposition}
\cite{LevMan2016b}\label{prop11} If a connected\textbf{ }graph $G\neq K_{2}$
is $1$-well-covered, then:

\emph{(i) }$G$ has at least $2\alpha(G)+1$ vertices;

\emph{(ii) }$\left\vert A\right\vert <\left\vert N\left(  A\right)
\right\vert $ for every independent set $A$.
\end{proposition}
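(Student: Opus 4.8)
The plan is to prove the two assertions in the reverse of the stated order: first a structural lemma that $G$ has minimum degree at least $2$, then assertion (ii) for every nonempty independent set, and finally assertion (i) as an immediate consequence of (ii).

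\emph{Step 1 (no pendant vertices).} Since $G$ is connected with at least two vertices, it has no isolated vertex, so by Theorem \ref{th4} we have $G\in\mathbf{W}_{2}$. I would show $\delta(G)\geq2$ by contradiction: suppose $a$ has a unique neighbor $b$. As $G$ is connected and $G\neq K_{2}$, the vertex $b$ has a further neighbor $c\neq a$. Applying the $\mathbf{W}_{2}$ property to the disjoint independent sets $\{a\}$ and $\{c\}$ yields two disjoint maximum independent sets $S_{1}\ni a$ and $S_{2}\ni c$. Because $S_{2}$ is maximal and $a\notin S_{2}$, the vertex $a$ must have a neighbor in $S_{2}$; its only neighbor being $b$, this forces $b\in S_{2}$. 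But then $b,c\in S_{2}$ with $bc\in E(G)$, contradicting the independence of $S_{2}$. Hence no such pendant exists, and $\delta(G)\geq2$.

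\emph{Step 2 (assertion (ii)).} Let $A$ be a nonempty independent set (the empty set being the sole, trivial exception to the strict inequality). Since $G$ is well-covered without isolated vertices, it is quasi-regularizable \cite{Berge1982}, so $\left\vert A\right\vert \leq\left\vert N(A)\right\vert $; it remains to rule out equality. Suppose $\left\vert N(A)\right\vert =\left\vert A\right\vert =k$. As $A$ is nonempty and $G$ has no isolated vertex, $N(A)\neq\emptyset$, so I may pick $u\in N(A)$, noting $u\notin A$. The key move is to delete $u$: because $G$ is $1$-well-covered, $G-u$ is well-covered, and because $\delta(G)\geq2$ no vertex of $G-u$ is isolated, so $G-u$ is again quasi-regularizable. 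In $G-u$ the set $A$ is still independent, and its neighborhood there is exactly $N(A)\setminus\{u\}$, of size $k-1$. Quasi-regularizability of $G-u$ then gives $k=\left\vert A\right\vert \leq\left\vert N(A)\setminus\{u\}\right\vert =k-1$, a contradiction. Therefore $\left\vert A\right\vert <\left\vert N(A)\right\vert $.

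\emph{Step 3 (assertion (i)).} Apply (ii) to a maximum independent set $S$, so $\left\vert S\right\vert =\alpha(G)$. Since $S$ is maximal, $N(S)=V(G)\setminus S$, whence $\left\vert N(S)\right\vert =n-\alpha(G)$. Assertion (ii) gives $n-\alpha(G)=\left\vert N(S)\right\vert >\left\vert S\right\vert =\alpha(G)$, that is $n>2\alpha(G)$, equivalently $n\geq2\alpha(G)+1$.

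The hard part, and the place where the hypotheses are genuinely used, is Step 1: the bound $\delta(G)\geq2$ is exactly what guarantees that deleting a single vertex cannot create an isolated vertex, so that Berge's quasi-regularizability survives the passage from $G$ to $G-u$ in Step 2. This is why connectivity and $G\neq K_{2}$ are indispensable—indeed $K_{2}$ and $2K_{2}$ both carry a pendant and violate (ii)—and the conceptual crux of the whole argument is the observation that the $1$-well-covered hypothesis lets one apply quasi-regularizability \emph{after} removing a chosen neighbor of $A$, which shrinks $N(A)$ by one and instantly contradicts the quasi-regularizability inequality.
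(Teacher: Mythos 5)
Your proof is correct, but note that the paper you were asked to match contains no proof of Proposition \ref{prop11} at all: it is imported from \cite{LevMan2016b} as a known result. So there is nothing internal to compare against, and your argument stands as a self-contained derivation using only ingredients the paper itself quotes, which is a genuine merit. Step 1 (no pendant vertices) is a sound application of Theorem \ref{th4}: the sets $\{a\}$ and $\{c\}$ are indeed disjoint independent sets, and forcing $b\in S_{2}$ alongside $c$ kills the pendant. The conceptual core, Step 2, is a clean trick: equality $\left\vert A\right\vert =\left\vert N(A)\right\vert$ is destroyed by deleting a single vertex $u\in N(A)$, since the $1$-well-covered hypothesis keeps $G-u$ well-covered, Step 1 guarantees $G-u$ has no isolated vertices, and Berge's theorem then yields $\left\vert A\right\vert \leq\left\vert N_{G-u}(A)\right\vert =\left\vert N(A)\right\vert -1$, a contradiction; every link in that chain (well-coveredness of $G-u$, $A$ independent in $G-u$, $N_{G-u}(A)=N_{G}(A)\setminus\{u\}$, $u\notin A$) checks out. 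Step 3 correctly gets (i) from (ii) because a maximum independent set $S$ is maximal, so $N(S)=V(G)\setminus S$. Two small remarks: you rightly flag that $A=\emptyset$ is a trivial exception to the strict inequality in (ii), which the paper's statement leaves implicit; and your closing claim that ``connectivity and $G\neq K_{2}$ are indispensable'' is well illustrated, though $2K_{2}$ is excluded by connectivity rather than by the pendant issue alone. By contrast, the source \cite{LevMan2016b} works within the $\mathbf{W}_{2}$/Theorem \ref{th1} framework throughout; your detour through vertex deletion plus quasi-regularizability is arguably more elementary and localizes exactly where each hypothesis is used.
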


Proposition \ref{prop11}\emph{(i)} implies that $K_{2}$ is the unique very
well-covered connected graph and also $1$-well-covered. In addition,
$I(K_{2};x)=1+2x$ is unimodal.

\begin{theorem}
\label{th3}If $G$ is a connected $1$-well-covered graph, $\left\vert V\left(
G\right)  \right\vert =n>2$, and $\alpha\left(  G\right)  =\alpha$, then the
following assertions are true:

\emph{(i)} $2\left(  \alpha-k\right)  \cdot s_{k}\leq(k+1)\cdot s_{k+1},1\leq
k<\alpha$;

\emph{(ii)} $s_{0}\leq s_{1}\leq\cdots\leq s_{\left\lceil \frac{2\alpha}%
{3}\right\rceil }$;

\emph{(iii) }$\left(  k+1\right)  \cdot s_{k+1}<\left(  n-2k\right)  \cdot
s_{k},1\leq k<\alpha$;

$\emph{(iv)\ }s_{\left\lceil \frac{n-1}{3}\right\rceil }>s_{\left\lceil
\frac{n-1}{3}\right\rceil +1}>\cdots>s_{\alpha}$.
\end{theorem}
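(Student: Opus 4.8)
The plan is to reuse the bipartite double-counting of Theorem \ref{th13}, run on the graph $H_k(G)=(\Omega_k,\Omega_{k+1},Y)$, but now to estimate $|Y|$ \emph{from both sides} using the extra structure of a connected $1$-well-covered graph. Since $G$ is connected with $n>2$, it has no isolated vertices and $G\neq K_2$, so Theorem \ref{th1} and Proposition \ref{prop11} both apply and $n\geq 2\alpha+1$. Counting edges of $H_k(G)$ from the $\Omega_{k+1}$ side always gives $|Y|=(k+1)s_{k+1}$, since each independent $(k+1)$-set has exactly $k+1$ independent $k$-subsets; the two halves of the theorem then come from bounding $|Y|$ from the $\Omega_k$ side below (for (i)--(ii)) and above (for (iii)--(iv)).

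For (i) I bound $\deg_{H_k}(W)$ below for $W\in\Omega_k$, $k<\alpha$. As $W$ is non-maximum, Theorem \ref{th1} yields disjoint independent sets $B_1,B_2$ with $W\cup B_1,W\cup B_2$ maximum; replacing each $B_i$ by $B_i\setminus W$ keeps them disjoint and disjoint from $W$ and forces $|B_1|=|B_2|=\alpha-k$, so $|B_1\cup B_2|=2(\alpha-k)$. Each $v\in B_1\cup B_2$ gives a distinct $W\cup\{v\}\in\Omega_{k+1}$, whence $\deg_{H_k}(W)\geq 2(\alpha-k)$ and $(k+1)s_{k+1}=|Y|\geq 2(\alpha-k)s_k$, proving (i). Part (ii) is then arithmetic: (i) gives $s_{k+1}\geq \tfrac{2(\alpha-k)}{k+1}s_k\geq s_k$ exactly when $2(\alpha-k)\geq k+1$, i.e. $k\leq\tfrac{2\alpha-1}{3}$; together with the trivial $s_0=1<n=s_1$, the chain reaches $s_{\lfloor(2\alpha-1)/3\rfloor+1}$, and I close using $\lfloor\tfrac{2\alpha-1}{3}\rfloor+1=\lceil\tfrac{2\alpha}{3}\rceil$ (the identity $\lceil m/3\rceil=\lfloor(m-1)/3\rfloor+1$).

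For (iii) I bound $\deg_{H_k}(W)$ above: $W$ extends only by vertices outside $N[W]$, so $\deg_{H_k}(W)=n-|N[W]|$. By Proposition \ref{prop11}(ii), $|N(W)|>|W|=k$, hence $|N[W]|\geq 2k+1$ and $\deg_{H_k}(W)\leq n-2k-1$; summing gives $(k+1)s_{k+1}=|Y|\leq(n-2k-1)s_k<(n-2k)s_k$, the last step using $s_k\geq 1$. Part (iv) follows arithmetically, since (iii) gives $s_{k+1}<\tfrac{n-2k}{k+1}s_k\leq s_k$ precisely when $n-2k\leq k+1$, i.e. $k\geq\lceil\tfrac{n-1}{3}\rceil$, yielding the strictly decreasing tail $s_{\lceil(n-1)/3\rceil}>\cdots>s_\alpha$.

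The step I expect to be delicate is the lower bound in (i): one must verify that the two maximal extensions supplied by Theorem \ref{th1} can be chosen disjoint \emph{and} avoiding $W$, so that they genuinely contribute $2(\alpha-k)$ \emph{distinct} vertices. This bookkeeping is exactly what produces the factor $2$ that separates $1$-well-covered graphs from merely well-covered ones (compare Corollary \ref{cor2}(i)); everything else is routine edge-counting and elementary ceiling/floor algebra.
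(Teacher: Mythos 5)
Your proof is correct and follows essentially the same route as the paper: the identical double counting of edges in $H_k(G)$, with Theorem \ref{th1} supplying the degree lower bound $2(\alpha-k)$ on the $\Omega_k$ side for \emph{(i)}--\emph{(ii)}, and Proposition \ref{prop11}\emph{(ii)} turning the Theorem \ref{th13} estimate into strict inequalities for \emph{(iii)}--\emph{(iv)}. Your extra bookkeeping (replacing $B_i$ by $B_i\setminus W$ to guarantee $2(\alpha-k)$ distinct extensions, and invoking $s_k\geq 1$ for the strict step) merely makes explicit details the paper leaves implicit.
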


\begin{proof}
\emph{(i)} According to Proposition \ref{prop11}\emph{(i)}, we have that
$2\alpha\cdot s_{0}=2\alpha\leq s_{1}=\left\vert V\left(  G\right)
\right\vert $.

Every $U\in\Omega_{k+1}\left(  G\right)  $ has $k+1$ subsets in $\Omega
_{k}\left(  G\right)  $, which means that the degree of every vertex $U$ in
$H$ is equal to $k+1$. Consequently, $\left\vert Y\right\vert =(k+1)\cdot
\left\vert \Omega_{k+1}\left(  G\right)  \right\vert =(k+1)\cdot s_{k+1}$. On
the other hand, by Theorem \ref{th1}, every $W\in\Omega_{k}\left(  G\right)  $
can be extended by two disjoint independent sets $B_{1},B_{2}$ such that
$W_{i}\cup B_{1},W_{i}\cup B_{2}$ are maximum independent sets in $G$. In
other words, the degree of every vertex $W\in\Omega_{k}\left(  G\right)  $ is
at least $2\left(  \alpha-k\right)  $.

In conclusion, we obtain $2\left(  \alpha-k\right)  \cdot s_{k}\leq(k+1)\cdot
s_{k+1}$, and this implies \emph{(i)}.

\emph{(ii)} According Part \emph{(i)}, we have
\[
s_{k}\leq\frac{k+1}{2\left(  \alpha-k\right)  }\cdot s_{k+1},
\]
which ensures that $s_{k}\leq s_{k+1}$ for every $k$ satisfying $\frac
{k+1}{2\left(  \alpha-k\right)  }\leq1$, i.e., for $k\leq\frac{2\alpha-1}{3}$,
at least. In other words, the monotone part of the sequence of coefficients
goes up to $k+1\leq\left\lfloor \frac{2\alpha+2}{3}\right\rfloor =\left\lceil
\frac{2\alpha}{3}\right\rceil $.

\emph{(iii)} and \emph{(iv) }By Proposition \ref{prop11}\emph{(ii)},
$\left\vert A\right\vert <\left\vert N\left(  A\right)  \right\vert $ for
every independent set $A$. To get the result, one has just to follow the lines
of the proof of Theorem \ref{th13} changing \textquotedblleft$\leq
$\textquotedblright\ for \textquotedblleft$<$\textquotedblright, when needed.
\end{proof}

In other words, for $1$-well-covered graphs, the domain of the Roller-Coaster
Conjecture can be shortened to $\{\left\lceil \frac{2\alpha}{3}\right\rceil
,\left\lceil \frac{2\alpha}{3}\right\rceil +1,...,\left\lceil \frac{n-1}%
{3}\right\rceil \}$, whenever $n\leq3\alpha-2$.

Let $\mathcal{H}=\{H_{v}:v\in V(G)\}$ be a family of graphs indexed by the
vertex set of a graph $G$. The corona $G\circ\mathcal{H}$ of $G$ and
$\mathcal{H}$ is the disjoint union of $G$ and $H_{v},v\in V(G)$, with
additional edges joining each vertex $v\in V(G)$ to all the vertices of
$H_{v}$. If $H_{v}=H$ for every $v\in V(G)$, then we denote $G\circ H$ instead
of $G\circ\mathcal{H}$ \cite{FruchtHarary}.

\begin{theorem}
\cite{LevMan2016b}\label{th2} Let $G$ be an arbitrary graph and $\mathcal{H}%
=\{H_{v}:v\in V(G)\}$ be a family of non-empty graphs. Then $G\circ
\mathcal{H}$ is $1$-well-covered if and only if each $H_{v}\in\mathcal{H}$ is
a complete graph of order two at least, for every non-isolated vertex $v$,
while for each isolated vertex $u$, its corresponding $H_{u}$ may be any
complete graph.
\end{theorem}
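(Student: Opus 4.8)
The plan is to first reduce the problem to a clean combinatorial picture of the independent sets of a corona. Write $n=\left\vert V(G)\right\vert $ and $m_{v}=\left\vert V(H_{v})\right\vert $. The first observation I would record is that, once every $H_{v}$ is complete, the set $\{v\}\cup V(H_{v})$ is a clique of $G\circ\mathcal{H}$ (here $v$ is joined to all of $H_{v}$, and $H_{v}$ is complete), so any independent set meets each \emph{gadget} $\{v\}\cup V(H_{v})$ in at most one vertex. From this I would describe a maximal independent set $S$ by the data $A=S\cap V(G)$ (an independent set of $G$) together with, for each $v\notin A$, a maximal independent set of $H_{v}$; its size is $\left\vert A\right\vert +\sum_{v\notin A}\mu_{v}$, where $\mu_{v}$ is the size of the chosen maximal set in $H_{v}$. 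Comparing the choices $A=\emptyset$ and $A=\{v\}$ then shows that $G\circ\mathcal{H}$ is well-covered if and only if every $H_{v}$ is complete: a non-well-covered or non-complete $H_{v}$ produces two maximal independent sets of different cardinalities. Under completeness, every gadget contributes exactly one vertex, so $\alpha(G\circ\mathcal{H})=n$, and since each $H_{v}$ is non-empty the corona has no isolated vertices; this lets me invoke Theorem \ref{th1} (equivalently Theorem \ref{th4}) for the $1$-well-covered part.

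For necessity I would argue by cases against the stated hypothesis. If some $H_{v}$ fails to be complete, the size comparison above already violates well-coveredness, hence $1$-well-coveredness. The remaining bad case is a non-isolated vertex $v$ with $H_{v}=K_{1}=\{w\}$; let $u\in V(G)$ be a $G$-neighbour of $v$. I would build a maximum independent set $S^{+}$ containing $u$ by putting $u$ in, forcing gadget $v$ to use $w$ (as $v$ is blocked by $u$), and filling every other gadget with a single $H$-vertex. Then $S=S^{+}\setminus\{w\}$ is a non-maximum independent set whose \emph{only} empty gadget is $v$, and since $v$ is adjacent to $u\in S$ the unique way to restore size $n$ is to re-adjoin $w$. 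Thus no two disjoint sets $B_{1},B_{2}$ with $S\cup B_{1},S\cup B_{2}$ maximum can exist, and Theorem \ref{th1} (in contrapositive form) shows $G\circ\mathcal{H}$ is not $1$-well-covered.

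For sufficiency I assume every $H_{v}$ is complete and every non-isolated $v$ has $m_{v}\geq2$, and I verify the criterion of Theorem \ref{th1} directly. Given a non-maximum independent set $S$, let $U$ be the (non-empty) set of gadgets disjoint from $S$. For each $v\in U$ I would select two distinct completion vertices $p_{v}\neq q_{v}$ inside $\{v\}\cup V(H_{v})$: two distinct vertices of $H_{v}$ when $m_{v}\geq2$, and $p_{v}=w,\ q_{v}=v$ when $m_{v}=1$ (in which case the hypothesis forces $v$ to be isolated in $G$, so using $v$ creates no $G$-edge). Setting $B_{1}=\{p_{v}:v\in U\}$ and $B_{2}=\{q_{v}:v\in U\}$, I would check that $B_{1}\cap B_{2}=\emptyset$, that each $S\cup B_{i}$ is independent (completion vertices touch $S$ only through the empty gadget $v$, and the only chosen $G$-vertices of the form $p_{v}=v$ or $q_{v}=v$ are isolated in $G$), and that each has size $\left\vert S\right\vert +\left\vert U\right\vert =n$, hence is maximum. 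This is exactly the pair required by Theorem \ref{th1}, giving $1$-well-coveredness.

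The main obstacle, and the point that dictates the whole hypothesis, is the coupling between the global $G$-independence constraint and the purely local choices inside each gadget: an empty gadget can always be completed by an $H$-vertex, but that offers a \emph{second}, disjoint completion only when $m_{v}\geq2$, and the alternative of using the $G$-vertex $v$ itself is available only when $v$ has no selected $G$-neighbour. For a non-isolated $v$ with $m_{v}=1$ this alternative can be blocked, which is precisely the failure exploited in the necessity direction; for an isolated $v$ it can never be blocked, which is why a single-vertex $H_{u}$ is harmless there. Keeping the two completions simultaneously independent and disjoint across all gadgets is therefore the delicate step, and it is handled cleanly by the vertex-disjoint choices $p_{v},q_{v}$ above.
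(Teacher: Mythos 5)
Your proof is correct, but there is nothing in this paper to compare it against: Theorem~\ref{th2} is stated here only by citation to \cite{LevMan2016b}, and no proof of it appears in the text. Your argument is self-contained and rests on exactly the tool the paper does import from that same reference, namely the two-disjoint-extensions criterion of Theorem~\ref{th1}, so it is very much in the spirit of the source. All three pillars of your argument check out: the description of maximal independent sets of $G\circ\mathcal{H}$ as an independent set $A$ of $G$ together with a maximal independent set of $H_{v}$ for each $v\notin A$, which via the comparison of $A=\emptyset$ with $A=\{v\}$ gives that the corona is well-covered if and only if every $H_{v}$ is complete; the necessity of $\left\vert V(H_{v})\right\vert \geq2$ for non-isolated $v$, where your set $S=S^{+}\setminus\{w\}$ admits the unique completion $\{w\}$ because $v$ is blocked by $u\in S$, contradicting Theorem~\ref{th1}; and the sufficiency, where the systems $B_{1}=\{p_{v}:v\in U\}$ and $B_{2}=\{q_{v}:v\in U\}$ are disjoint, independent, and complete $S$ to size $n=\alpha(G\circ\mathcal{H})$. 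One expository wrinkle worth fixing: you introduce the structure of maximal independent sets \emph{after} assuming every $H_{v}$ is complete, yet you then apply it precisely in the case where some $H_{v}$ is not complete. Fortunately the description is valid for arbitrary non-empty $H_{v}$ --- for $v\notin A$ the trace of a maximal set on $H_{v}$ must be non-empty and maximal in $H_{v}$, since the only neighbor of $V(H_{v})$ outside $H_{v}$ is $v$ --- so the necessity argument stands; just state that fact without the completeness hypothesis.
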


It is easy to see that $H\circ K_{1}$ is very well-covered for every graph
$H$, and some properties of $I\left(  H\circ K_{1};x\right)  $ are presented
in \cite{LevMan2008}. Several findings concerning the palindromicity of
$I\left(  H\circ Y;x\right)  $ are proved in
\cite{LevMan2007,LevMan2016a.St98,Zhu2016}.

\begin{theorem}
\label{th6}\cite{Gu92d} $I\left(  H\circ Y;x\right)  =\left(  I\left(
Y;x\right)  \right)  ^{n}\bullet I\left(  H;\frac{x}{I\left(  Y;x\right)
}\right)  $, where $n=\left\vert V\left(  H\right)  \right\vert $.
\end{theorem}

Theorem \ref{th6} allows finding closed formulae for $I\left(  H\circ
Y;x\right)  $, once such formulae are known for both $I\left(  H;x\right)  $
and $I\left(  Y;x\right)  $; for instance, one can obtain closed formulae for
$I\left(  H\circ K_{p};x\right)  $, where $H\in\left\{  P_{n},C_{n}%
,K_{1,n}\right\}  $ \cite{Arocha,GutHar,LevMan2008}.

\begin{theorem}
Let $H$\ be a connected graph. If $G=H\circ K_{2}$ and $\alpha(G)=\alpha$,
then the following assertions are true:

\emph{(i)} $G$ is a $1$-well-covered graph;

\emph{(ii)} $G$ is a $2$-quasi-regularizable graph of order $n=3\alpha$;

\emph{(iii)} $2\left(  \alpha-k\right)  \cdot s_{k}\leq\left(  k+1\right)
\cdot s_{k+1}\leq3\left(  \alpha-k\right)  \cdot s_{k},1\leq k<\alpha$;

\emph{(iv) }$s_{0}\leq s_{1}\leq\cdots\leq s_{\left\lceil \frac{2\alpha}%
{3}\right\rceil }$ and $s_{\left\lceil \frac{3\alpha-1}{4}\right\rceil }%
\geq\cdots\geq s_{\alpha-1}\geq s_{\alpha}$;

\emph{(v)} if $\alpha\geq3$, then $s_{\alpha-3}\cdot$ $s_{\alpha-1}\leq
s_{\alpha-2}^{2}$ and $s_{\alpha-2}\cdot$ $s_{\alpha}\leq s_{\alpha-1}^{2}$;

\emph{(vi)} if $\alpha\leq17$, then $I\left(  G;x\right)  $ is unimodal.
\end{theorem}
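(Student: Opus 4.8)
The plan is to verify the six assertions in turn, exploiting the explicit corona structure $G = H \circ K_2$ together with the general results already established.

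For \emph{(i)} I would invoke Theorem~\ref{th2} directly: since $H$ is connected, every vertex is non-isolated, and $K_2$ is a complete graph of order two, so $G = H \circ K_2$ is $1$-well-covered. For \emph{(ii)}, note that $|V(G)| = |V(H)| \cdot (1+2) = 3|V(H)|$; since attaching a $K_2$ to each vertex forces $\alpha(G) = |V(H)|$ (one may pick at most one vertex from each attached triangle-like cluster, and the copy of $H$ contributes nothing beyond this count), we get $n = 3\alpha$. To show $G$ is $2$-quasi-regularizable I would check $2|S| \le |N(S)|$ for every independent set $S$: decompose $S$ into the part $S_H$ lying in the copy of $H$ and the part $S_K$ lying in the attached $K_2$'s, and count neighbours using that each vertex of $H$ is joined to its two pendant $K_2$-vertices.

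Assertion \emph{(iii)} is then immediate by combining the lower bound from Theorem~\ref{th3}\emph{(i)} (valid since $G$ is connected $1$-well-covered with $n = 3\alpha > 2$) with the upper bound from Theorem~\ref{th13}\emph{(i)} specialized to $\lambda = 2$ and $n = 3\alpha$, which reads $(k+1)s_{k+1} \le (3\alpha - 3k)s_k = 3(\alpha-k)s_k$. Part \emph{(iv)} follows from \emph{(iii)}: the left inequality $2(\alpha-k)s_k \le (k+1)s_{k+1}$ gives $s_k \le s_{k+1}$ whenever $2(\alpha-k) \le k+1$, i.e.\ up to $\lceil 2\alpha/3\rceil$ exactly as in Theorem~\ref{th3}\emph{(ii)}; the right inequality $(k+1)s_{k+1} \le 3(\alpha-k)s_k$ gives $s_{k+1} \le s_k$ whenever $3(\alpha-k) \le k+1$, i.e.\ $k \ge \lceil (3\alpha-1)/4\rceil$, yielding the non-increasing tail.

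The substantive work is in \emph{(v)}, the log-concavity inequalities at the top of the polynomial, and these feed \emph{(vi)}. Here I would use the closed form for $I(G;x)$ from Theorem~\ref{th6} with $Y = K_2$: since $I(K_2;x) = 1+2x$, we get $I(G;x) = (1+2x)^{|V(H)|}\, I\!\left(H; \tfrac{x}{1+2x}\right)$, so the top coefficients $s_\alpha, s_{\alpha-1}, s_{\alpha-2}, s_{\alpha-3}$ can be expressed in terms of the top coefficients of $I(H;x)$ (equivalently, in terms of $\alpha$ and small independence-structure counts of $H$). I expect the main obstacle to be establishing the two quadratic inequalities in \emph{(v)}; the plan is to expand these leading coefficients explicitly via the corona formula and reduce each claimed inequality to an elementary polynomial inequality in $\alpha$ and the relevant counts, which should hold for all $\alpha \ge 3$. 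Finally, for \emph{(vi)} I would combine the ascending part $s_0 \le \cdots \le s_{\lceil 2\alpha/3\rceil}$ and the descending tail from \emph{(iv)} with the log-concavity at the top from \emph{(v)}: these pin down the shape everywhere except a bounded middle window, and a finite check (or the numerical verification permitted by the bound $\alpha \le 17$) closes the remaining gap to conclude unimodality.
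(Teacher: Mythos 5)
Your parts \emph{(i)}--\emph{(iv)} follow the paper's proof: Theorem \ref{th2} for \emph{(i)}, the same split $S=S_1\cup S_2$ and neighbour count for \emph{(ii)}, and the same combination of Theorem \ref{th3}\emph{(i)} with Theorem \ref{th13}\emph{(i)} (at $\lambda=2$, $n=3\alpha$) for \emph{(iii)}--\emph{(iv)} (modulo a slip of direction: the left inequality yields $s_k\le s_{k+1}$ when $2(\alpha-k)\ge k+1$, not $\le$). The genuine gap is in \emph{(v)}. You propose to expand the top coefficients through $I(G;x)=(1+2x)^{\alpha}\,I\!\left(H;\frac{x}{1+2x}\right)$ and assert that the two quadratic inequalities ``should hold'' after reduction; this is a plan, not a proof, and it is not at all clear it goes through, since the resulting inequalities are quadratic forms in \emph{all} the coefficients of $I(H;x)$ (one gets $s_{\alpha-j}=\sum_{i} t_i\,2^{\alpha-j-i}\binom{\alpha-i}{j-i}$ with $t_i$ the coefficients of $I(H;x)$), not an inequality in $\alpha$ and a few counts. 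The paper instead derives \emph{(v)} in two lines from \emph{(iii)}, which you already have: the lower bound at $k=\alpha-3$ reads $6\,s_{\alpha-3}\le(\alpha-2)\,s_{\alpha-2}$ and the upper bound at $k=\alpha-2$ reads $(\alpha-1)\,s_{\alpha-1}\le 6\,s_{\alpha-2}$; multiplying them gives $s_{\alpha-3}\,s_{\alpha-1}\le\frac{\alpha-2}{\alpha-1}\,s_{\alpha-2}^{2}\le s_{\alpha-2}^{2}$. Similarly, the lower bound at $k=\alpha-2$, namely $4\,s_{\alpha-2}\le(\alpha-1)\,s_{\alpha-1}$, multiplied by the upper bound at $k=\alpha-1$, namely $\alpha\,s_{\alpha}\le 3\,s_{\alpha-1}$, gives $s_{\alpha-2}\,s_{\alpha}\le\frac{3(\alpha-1)}{4\alpha}\,s_{\alpha-1}^{2}\le s_{\alpha-1}^{2}$. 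You missed this pairing trick, and without it \emph{(v)} remains unproved.

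Your plan for \emph{(vi)} also fails as stated. You want to close the ``bounded middle window'' by ``a finite check (or the numerical verification permitted by the bound $\alpha\le 17$)'', but the coefficients in that window depend on $H$, and for each fixed $\alpha$ there are infinitely many connected graphs $H$; no finite verification over graphs is available. The correct argument is purely arithmetic on the \emph{indices}: for $\alpha\le 17$ one has $\left\lceil\frac{3\alpha-1}{4}\right\rceil-\left\lceil\frac{2\alpha}{3}\right\rceil\le 1$ (this fails first at $\alpha=18$), so the non-decreasing prefix of \emph{(iv)} (up to $\left\lceil\frac{2\alpha}{3}\right\rceil$) and the non-increasing suffix (from $\left\lceil\frac{3\alpha-1}{4}\right\rceil$) leave no index uncovered; and a sequence that is non-decreasing up to $m$ and non-increasing from $m+1$ onward is unimodal no matter how $s_m$ compares with $s_{m+1}$. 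This is exactly the paper's proof of \emph{(vi)}, and note that it uses only \emph{(iv)}; the log-concavity of \emph{(v)} is not needed for it.
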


\begin{proof}
\textbf{\ }\emph{(i) }It follows from Theorem \ref{th2}.

\emph{(ii) }Let $S=S_{1}\cup S_{2}$ be an independent set in $G$, where
$S_{1}\subseteq V(H)$, while $S_{2}\subseteq V(G)-V(H)$. Then $2\left\vert
S_{1}\right\vert =\left\vert N_{G}(S_{1})-V(H)\right\vert \leq\left\vert
N_{G}(S_{1})\right\vert $, because every vertex of $S_{1}$ has exactly two
neighbors in $V(G)-V(H)$, and $2\left\vert S_{2}\right\vert =\left\vert
N_{G}(S_{2})\right\vert $, since each vertex from $S_{2}$ has exactly two
neighbors in $G$. Hence, we get that:
\[
2\left\vert S\right\vert =2\left\vert S_{1}\right\vert +2\left\vert
S_{2}\right\vert \leq\left\vert N_{G}(S_{1})-V(H)\right\vert +\left\vert
N_{G}(S_{2})\right\vert \leq\left\vert N_{G}(S)\right\vert ,
\]
i.e., $G$ is $2$-quasi-regularizable. Clearly, $\alpha=\left\vert
V(H)\right\vert $. Thus $n=3\alpha$.

\emph{(iii)} It follows from Theorem \ref{th3}\emph{(i)}, Theorem
\ref{th13}\emph{(i)}, and the fact that $n=3\alpha$.

\emph{(iv) }By Theorem \ref{th13}, $s_{\left\lceil \frac{3\alpha-1}%
{4}\right\rceil }\geq\cdots\geq s_{\alpha-1}\geq s_{\alpha}$, because $G$ is
$2$-quasi-regularizable. According to Theorem \ref{th3}\emph{(ii)}, the
polynomial $I(G\circ K_{2};x)$ satisfies $s_{0}\leq s_{1}\leq\cdots\leq
s_{\left\lceil \frac{2\alpha}{3}\right\rceil }$.

\emph{(v)} Let us specialize the inequality $2\left(  \alpha-k\right)  \cdot
s_{k}\leq\left(  k+1\right)  \cdot s_{k+1}$ at $k=\alpha-3$ and the inequality
$\left(  k+1\right)  \cdot s_{k+1}\leq3\left(  \alpha-k\right)  \cdot s_{k}$
at $k=\alpha-2$. It implies
\[
s_{\alpha-3}\cdot s_{\alpha-1}\leq\frac{\left(  \alpha-2\right)  }{\left(
\alpha-1\right)  }\cdot s_{\alpha-2}^{2}\leq s_{\alpha-2}^{2}%
\]

When we substitute $k=\alpha-2$ and $k=\alpha-1$ in the same manner, we
obtain
\[
s_{\alpha-2}\cdot s_{\alpha}\leq\frac{3\left(  \alpha-1\right)  }{4\alpha
}\cdot s_{\alpha-1}^{2}\leq s_{\alpha-1}^{2}\text{.}%
\]

\emph{(vi)} By part \emph{(iv)}, the sequence of coefficients of $I\left(
G;x\right)  $ is non-decreasing up to $\left\lceil \frac{2\alpha}%
{3}\right\rceil $ and non-increasing starting from $\left\lceil \frac
{3\alpha-1}{4}\right\rceil $. In addition, the constraint $\alpha\leq17$
ensures that $\left\lceil \frac{3\alpha-1}{4}\right\rceil -\left\lceil
\frac{2\alpha}{3}\right\rceil \leq1$.
\end{proof}

In other words, if $G$ can be represented as $H\circ K_{2}$, then $G$ is
$1$-well-covered and the domain of the Roller-Coaster Conjecture can be
shortened to $\{\left\lceil \frac{2\alpha}{3}\right\rceil ,\left\lceil
\frac{2\alpha}{3}\right\rceil +1,...,\left\lceil \frac{3\alpha-1}%
{4}\right\rceil \}$.

It is known that:

\begin{itemize}
\item each polynomial with positive coefficients that has only real roots is unimodal;

\item there exist graphs whose independence polynomials have all the roots
real (for example, $K_{1,3}$-free graphs \cite{ChudSey}, $P_{n}\circ
K_{1\text{ }}$for any $n\geq1$ \cite{LevMan2008});

\item $I\left(  H\circ K_{p};x\right)  $ has only real roots if and only if
the same is true for $I\left(  H;x\right)  $ \cite{LevMan2008,Man2009}.
\end{itemize}

Hence, using Theorem \ref{th2}, we get the following.

\begin{corollary}
\label{cor4}If $I\left(  H;x\right)  $ has only real roots and $p\geq2$, then
every graph
\[
G\in\left\{  H\circ K_{p},\left(  H\circ K_{p}\right)  \circ K_{p},\left(
\left(  H\circ K_{p}\right)  \circ K_{p}\right)  \circ K_{p},...\right\}
\]
is $1$-well-covered and its $I\left(  G;x\right)  $ is unimodal, as having all
its roots real.
\end{corollary}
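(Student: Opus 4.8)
The plan is to chain three already-established facts about coronas and real-rooted independence polynomials. First I would invoke Theorem~\ref{th2}: since $p\geq 2$, for every graph $F$ the corona $F\circ K_p$ is $1$-well-covered (each $K_p$ is a complete graph of order at least two, and this covers both the non-isolated and isolated vertex cases). Applying this with $F=H$, then with $F=H\circ K_p$, then with $F=(H\circ K_p)\circ K_p$, and so on, shows that every member of the listed family is $1$-well-covered. Thus the ``$1$-well-covered'' half of the conclusion is an immediate induction on the number of corona operations.

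Next I would handle real-rootedness. The key tool is the cited equivalence: $I(F\circ K_p;x)$ has only real roots if and only if $I(F;x)$ does. By hypothesis $I(H;x)$ has only real roots, so $I(H\circ K_p;x)$ has only real roots; feeding this back in, $I((H\circ K_p)\circ K_p;x)$ has only real roots; and an easy induction propagates real-rootedness through the entire tower. At each stage the graph $F$ to which we apply the equivalence is the corona built at the previous stage, so the induction hypothesis supplies exactly the real-rootedness of $I(F;x)$ that the equivalence requires.

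Finally I would combine real-rootedness with positivity of coefficients. Every $s_k$ counts independent sets of a fixed size, so the coefficients of any independence polynomial are nonnegative, and $s_0=1>0$ while $s_1=n>0$; hence on its support the polynomial has positive coefficients. Invoking the standard fact quoted in the excerpt---a polynomial with positive coefficients and only real roots is unimodal---yields unimodality of $I(G;x)$ for each $G$ in the family. Assembling the three pieces completes the proof.

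I do not expect a genuine obstacle here, since all three ingredients are quoted verbatim before the statement; the only thing requiring care is the bookkeeping of the induction, making explicit that the graph to which each cited result is applied at step $m$ is the corona produced at step $m-1$, so that ``$1$-well-covered'' and ``real-rooted'' are exactly the hypotheses available from the inductive step. If I wanted to be fully rigorous I would state the induction on $m\geq 1$ with the claim ``the $m$-fold corona $(\cdots((H\circ K_p)\circ K_p)\cdots)\circ K_p$ is $1$-well-covered and has real-rooted, hence unimodal, independence polynomial,'' with the base case $m=1$ and the inductive step as above; the real subtlety, if any, is merely noticing that the equivalence for real roots is symmetric enough to be iterated in the forward direction.
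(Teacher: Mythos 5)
Your proposal is correct and matches the paper's argument: the paper derives Corollary \ref{cor4} exactly by combining Theorem \ref{th2} (with $K_p$, $p\geq2$, giving $1$-well-coveredness of each iterated corona), the cited equivalence that $I\left(F\circ K_p;x\right)$ is real-rooted if and only if $I\left(F;x\right)$ is (iterated up the tower), and the standard fact that a polynomial with positive coefficients and only real roots is unimodal. Your explicit induction is just a more careful write-up of the same chain of reasoning.
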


\section{Conclusions and future work}

In this paper we proved that for $1$-well-covered graphs the\textit{\ }%
\textquotedblleft chaotic interval\textquotedblright\ $(\left\lceil
\frac{\alpha}{2}\right\rceil ,\left\lceil \frac{\alpha}{2}\right\rceil
+1,...,\alpha)$ involved in Roller-Coaster Conjecture can be shortened to
$\{\left\lceil \frac{2\alpha}{3}\right\rceil ,\left\lceil \frac{2\alpha}%
{3}\right\rceil +1,...,\alpha\}$. Based on this finding, we propose a
Roller-Coaster Conjecture for $1$-well-covered graphs as follows.

\begin{conjecture}
\label{con1}For every permutation $\sigma$ of the set $\{\left\lceil
\frac{2\alpha}{3}\right\rceil ,\left\lceil \frac{2\alpha}{3}\right\rceil
+1,...,\alpha\}$ there exists a $1$-well-covered graph $G$ with $\alpha
(G)=\alpha$ and $\left\vert V(G)\right\vert =n$ such that the coefficients of
$I(G;x)\ $satisfy $s_{\sigma(\left\lceil \frac{2\alpha}{3}\right\rceil
)}<s_{\sigma(\left\lceil \frac{2\alpha}{3}\right\rceil +1)}<\cdots
<s_{\sigma(\alpha)}$.
\end{conjecture}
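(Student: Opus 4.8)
The plan is to realize each prescribed order by assembling a $1$-well-covered graph from small tunable pieces, exploiting two facts: disjoint unions of $1$-well-covered graphs are again $1$-well-covered (the union of well-covered graphs is well-covered, and deleting a vertex alters only one component), and independence polynomials multiply under disjoint union. This yields a multiplicatively closed pool of admissible polynomials, which is exactly the setting in which the unrestricted realization results \cite{AlMalSchErdos,CuPe2017} are proved; the whole difficulty is to supply $1$-well-covered factors rich enough to steer the \emph{top} coefficients while respecting the constraints that $1$-well-coveredness forces on them.

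The most natural source of factors is the corona: by Theorem \ref{th2}, $H\circ K_{p}$ is $1$-well-covered for an \emph{arbitrary} graph $H$ and every $p\geq2$, and by Theorem \ref{th6} (with $I(K_{p};x)=1+px$) its polynomial is
\[
I(H\circ K_{p};x)=(1+px)^{\alpha}\,I\!\left(H;\frac{x}{1+px}\right)=\sum_{i\geq0}h_{i}\,x^{i}(1+px)^{\alpha-i},\qquad\alpha=\left|V(H)\right|,
\]
so that $s_{k}=\sum_{i}h_{i}\binom{\alpha-i}{k-i}p^{\,k-i}$. Since the ordering of $(h_{i})$ may be prescribed freely by \cite{AlMalSchErdos}, one would first try to push a target order through this binomial transfer. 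A preliminary step is therefore to understand the image of the transfer map on the chaotic window $\{\left\lceil\frac{2\alpha}{3}\right\rceil,\dots,\alpha\}$; note that the lower forced inequalities $s_{0}\leq\cdots\leq s_{\left\lceil 2\alpha/3\right\rceil}$ of Theorem \ref{th3}\emph{(ii)} cause no trouble, as they only constrain indices below the window.

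Here lies the main obstacle, and it reveals that coronas alone do \emph{not} suffice. The set $S$ consisting of one pendant vertex from each attached $K_{p}$ satisfies $\left|N(S)\right|=p\left|S\right|$, so $H\circ K_{p}$ is exactly $p$-quasi-regularizable; Theorem \ref{th13} then forces $s_{r}\geq\cdots\geq s_{\alpha}$ from $r=\left\lceil\frac{n-1}{p+2}\right\rceil<\alpha$, i.e. the very top of the window is pinned non-increasing. Concretely, Theorem \ref{th13}\emph{(i)} at $k=\alpha-1$ gives $\alpha\, s_{\alpha}\leq(p+1)s_{\alpha-1}$, so $s_{\alpha-1}\geq s_{\alpha}$ is forced whenever $\alpha>p+1$; and this persists for every disjoint union of coronas (the quasi-regularizability parameter of a disjoint union is the minimum over its parts, hence remains $\geq2$). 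Thus the \emph{increasing} permutation $\sigma=\mathrm{id}$ --- which demands $s_{\alpha-1}<s_{\alpha}$ --- is unreachable by any corona construction. Reaching $s_{\alpha-1}<s_{\alpha}$ requires, again by Theorem \ref{th13}\emph{(i)}, a merely $1$-quasi-regularizable graph with $n\geq3\alpha-1$. So the crux is to construct \emph{connected} $1$-well-covered graphs that are only $1$-quasi-regularizable --- their expansion $\left|N(A)\right|$ staying as close as possible to the bound $\left|A\right|+1$ of Proposition \ref{prop11}\emph{(ii)} on the large independent sets --- yet whose top independence coefficients remain tunable.

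I would attack this crux by \emph{surgery on the validated construction}: start from a well-covered graph $W$ delivered by \cite{CuPe2017} that already realizes the desired order on its upper coefficients, and convert $W$ into a $1$-well-covered graph by a local modification that (i) supplies, for every non-maximum independent set, the second disjoint extension required by Theorem \ref{th1}, while (ii) leaving the top window coefficients essentially unchanged --- for instance by adding a controlled matching or a thin $W_{2}$-gadget rather than a fully expanding pendant clique. The two requirements pull against each other: enforcing the second extension is precisely what drives the expansion up toward the $2$-quasi-regularizable regime that destroys top chaos, so the modification must be as expansion-thrifty as possible. Proving that such a thrifty $1$-well-covering surgery exists and preserves the realized order on $\{\left\lceil\frac{2\alpha}{3}\right\rceil,\dots,\alpha\}$ --- equivalently, exhibiting a single family of only-$1$-quasi-regularizable $1$-well-covered graphs whose upper coefficient vectors fill the admissible region --- is the decisive and hardest step. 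Once it is in hand, assembling an arbitrary permutation via disjoint-union products, together with the bookkeeping that $\alpha(G)$, $\left|V(G)\right|$, and $1$-well-coveredness are preserved, is routine, and settles Conjecture \ref{con1}.
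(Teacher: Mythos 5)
The statement you were asked to prove is Conjecture~\ref{con1}, and the paper itself does \emph{not} prove it: it is posed as an open problem in the concluding section, where the authors only remark that it might be validated by adapting the technique of \cite{CuPe2017}, the stated obstacle being the construction of a $1$-well-covered graph $G$ in which, for a given $k$, each $S\in\Omega_{k+1}\left(G\right)$ lies in \emph{exactly} two maximum independent sets (so that the counting in \cite{CuPe2017} goes through with equality rather than the inequality of Theorem~\ref{th1}). Your text is likewise not a proof, and you say so yourself: the decisive step --- the existence of an ``expansion-thrifty'' surgery converting the well-covered graphs of \cite{CuPe2017} into $1$-well-covered ones while preserving the top coefficients --- is explicitly deferred. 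So as a proof attempt it establishes nothing beyond a research plan; that is the primary gap, and it is the same gap the paper acknowledges (though your surgery idea differs from the paper's suggested ``exactly two extensions'' route).

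Beyond incompleteness, the impossibility claim you use to motivate the surgery is wrong. The conjecture places no constraint tying $n$ to $\alpha$, so you may take $p$ large in $H\circ K_{p}$. There $n=\left(p+1\right)\alpha$ and $\lambda=p$, so Theorem~\ref{th13}\emph{(i)} at $k=\alpha-1$ reads $\alpha\cdot s_{\alpha}\leq\left(p+1\right)\cdot s_{\alpha-1}$, which \emph{permits} $s_{\alpha}>s_{\alpha-1}$ as soon as $p+1>\alpha$; correspondingly, the pinning index $r=\lceil\frac{n-1}{p+2}\rceil$ equals $\alpha$ for $p\geq\alpha$, making Theorem~\ref{th13}\emph{(ii)} vacuous. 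Your own transfer formula shows this concretely: $s_{\alpha}=\sum_{i}h_{i}p^{\alpha-i}$ and $s_{\alpha-1}=\sum_{i}h_{i}\left(\alpha-i\right)p^{\alpha-1-i}$, so $s_{\alpha}/s_{\alpha-1}\sim p/\alpha$ grows without bound. Already $I\left(K_{2}\circ K_{2};x\right)=1+6x+8x^{2}$ has strictly increasing top coefficients. Hence ``the increasing permutation is unreachable by any corona construction'' is false, as is the deduction that one needs a merely $1$-quasi-regularizable graph with $n\geq3\alpha-1$: Theorem~\ref{th13} only forces $n>\alpha+\left(\lambda+1\right)\left(\alpha-1\right)$, which large-$p$ coronas satisfy with $\lambda=p$. (The error is treating the $\lambda\geq2$ pinning as independent of $n$; the threshold $r$ scales with $n$, and for disjoint unions the relevant $n$ is the total order.) The genuinely open question is not evading the pinning but showing that some $1$-well-covered family --- corona images under the binomial transfer, or anything else --- realizes \emph{every} ordering of $\{\lceil\frac{2\alpha}{3}\rceil,\ldots,\alpha\}$; neither your proposal nor the paper resolves that.
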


We incline to think that Conjecture \ref{con1} can be validated using a
technique similar to one presented in \cite{CuPe2017}. The only obstacle we
see now is in constructing a $1$-well-covered graph $G$\ such that for every
given positive integer $k$ each $S\in\Omega_{k+1}\left(  G\right)  $ is
included in exactly two maximum independent sets.

\begin{problem}
Characterize $1$-well-covered graphs whose independence polynomials are unimodal.
\end{problem}

The nature and location of the roots of $I\left(  G;x\right)  $ for a
well-covered graph $G$ were first analyzed in \cite{Brown}. It is worth
mentioning that there are $1$-well-covered graphs whose independence
polynomials have non-real roots; e.g., $I(K_{1,3}\circ K_{2};x)=1+12x+51x^{2}%
+93x^{3}+62x^{4}$. Taking into account Corollary \ref{cor4}, we propose the following.

\begin{problem}
Characterize $1$-well-covered graphs whose independence polynomials have all
the roots real.
\end{problem}


\begin{thebibliography}{99}                                                                                               %


\bibitem {AlMalSchErdos}Y. Alavi, P. J. Malde, A. J. Schwenk, P. Erd\"{o}s,
\emph{The vertex independence sequence of a graph is not constrained},
Congressus Numerantium 58 (1987) 15--23.

\bibitem {Arocha}J. L. Arocha, \emph{Propriedades del polinomio independiente
de un grafo}, Revista Ciencias Matematicas, vol. \textbf{V} (1984) 103--110.

\bibitem {Berge1982}C. Berge, \emph{Some common properties for regularizable
graphs, edge-critical graphs and B-graphs}, Annals of Discrete Mathematics
\textbf{12} (1982) 31--44.

\bibitem {Brown}J. I. Brown, K. Dilcher, R. J. Nowakowski, \emph{Roots of
independence polynomials of well-covered graphs}, Journal of Algebraic
Combinatorics \textbf{11} (2000) 197--210.

\bibitem {ChudSey}M. Chudnovsky, P. Seymour, \emph{The roots of the
independence polynomial of a claw-free graph}, Journal of Combinatorial Theory
B \textbf{97} (2007) 350--357.

\bibitem {CuPe2017}J. Cutler, L. Pebody, \emph{Maximal-clique partitions and
the roller coaster conjecture}, Journal of Combinatorial Theory A \textbf{145}
(2017) 25--35.

\bibitem {Favaron1982}O. Favaron, \emph{Very well-covered graphs}, Discrete
Mathematics \textbf{42} (1982) 177--187.

\bibitem {FruchtHarary}R. Frucht, F. Harary, \emph{On the corona of two
graphs}, Aequationes Math. \textbf{4} (1970) 322--324.

\bibitem {GutHar}I. Gutman, F. Harary, \emph{Generalizations of the matching
polynomial}, Utilitas Mathematica \textbf{24} (1983) 97--106.

\bibitem {Gu92d}I. Gutman, \emph{Independence vertex sets in some compound
graphs}, Publications de l'Institut Math\'{e}matique \textbf{52} (1992)

\bibitem {LevMan2003a}V. E. Levit, E. Mandrescu, \emph{On unimodality of
independence polynomials of some well-covered trees}, DMTCS 2003 (C. S. Calude
et al. eds.), LNCS 2731, Springer-Verlag (2003) 237--256.

\bibitem {LevMan2003b}V. E. Levit, E. Mandrescu, \emph{A family of
well-covered graphs with unimodal independence polynomials}, Congressus
Numerantium \textbf{165} (2003) 195--207.

\bibitem {LevMan2004}V. E. Levit, E. Mandrescu, \emph{Very well-covered graphs
with log-concave independence polynomials}, Carpathian Journal of Mathematics
\textbf{20} (2004) 73--80.

\bibitem {LevManGreece}V. E. Levit, E. Mandrescu, \emph{The independence
polynomial of a graph - a survey}, Proceedings of the $1^{st}$ International
Conference on Algebraic Informatics, Aristotle University of Thessaloniki,
Greece, (2005) 233--254. Available online at http://web.auth.gr/cai05/papers/20.pdf

\bibitem {LevMan2006a}V. E. Levit, E. Mandrescu, \emph{Independence
polynomials and the unimodality conjecture for very well-covered,
quasi-regularizable, and perfect graphs}, Graph Theory Trends in Mathematics,
243--254, 2006 Birkh\"{a}user-Verlag Basel/Switzerland.

\bibitem {LevMan2006b}V. E. Levit, E. Mandrescu, \emph{Independence
polynomials of well-covered graphs: Generic counterexamples for the
unimodality conjecture}, European Journal of Combinatorics \textbf{27} (2006) 931--939.

\bibitem {LevMan2007}V. E. Levit, E. Mandrescu, \emph{A family of graphs whose
independence polynomials are both palindromic and unimodal}, Carpathian
Journal of Mathematics \textbf{23} (2007) 108--116.

\bibitem {LevMan2008}V. E. Levit, E. Mandrescu, \emph{On the roots of
independence polynomials of almost all very well-covered graphs}, Discrete
Applied Mathematics \textbf{156} (2008) 478--491.

\bibitem {LevMan2012}V. E. Levit, E. Mandrescu, \emph{On the independence
polynomial of an antiregular graph}, Carpathian Journal of Mathematics
\textbf{28} (2012) 263--272.

\bibitem {LevMan2016a}V. E. Levit, E. Mandrescu, \emph{On the independence
polynomial of the corona of graphs}, Discrete Applied Mathematics \textbf{203}
(2016) 85--93.

\bibitem {LevMan2016b}V. E. Levit, E. Mandrescu, $1$\emph{-well-covered graphs
revisited}, arXiv:1610.03972 [math.CO] 2016, 16 pp.

\bibitem {Man2009}E. Mandrescu, \emph{Building graphs whose independence
polynomials have only real roots}, Graphs and Combinatorics \textbf{25} (2009) 545--556.

\bibitem {Matchett}P. Matchett, \emph{Operations on well-covered graphs and
the roller-coaster conjecture}, The Electronic Journal of Combinatorics
\textbf{11} (2004) \#R45.

\bibitem {Michael Travis}T. S. Michael, W. N. Traves, \emph{Independence
sequences of well-covered graphs: non-unimodality and the roller-coaster
conjecture}, Graphs and Combinatorics \textbf{19} (2003) 403--411.

\bibitem {plum}M. D. Plummer, \emph{Some covering concepts in graphs}, Journal
of Combinatorial Theory \textbf{8} (1970) 91--98.

\bibitem {StaplesThesis}J. W. Staples, \emph{On some subclasses of
well-covered graphs}, Ph.D. Thesis, 1975, Vanderbilt University.

\bibitem {Staples1979}J. W. Staples, \emph{On some subclasses of well-covered
graphs}, Journal of Graph Theory \textbf{3} (1979) 197--204.

\bibitem {St98}D. Stevanovi\'{c}, \emph{Graphs with palindromic independence
polynomial}, Graph Theory Notes of New York Academy of Sciences \textbf{XXXIV}
(1998) 31--36.

\bibitem {Wang}Y. Wang, B. X. Zhu, \emph{On the unimodality of independence
polynomials of some graphs}, European Journal of Combinatorics \textbf{32}
(2011) 10--20.

\bibitem {Zhu2016}B. X. Zhu, \emph{Clique cover products and unimodality of
independence polynomials}, Discrete Applied Mathematics \textbf{206} (2016) 172--180.

\bibitem {Zhu}Z. F. Zhu, \emph{The unimodality of independence polynomials of
some graphs}, Australasian Journal of Combinatorics \textbf{38} (2007) 27--33.
\end{thebibliography}
\end{document}